\def\vs{\vspace{0.1cm}}
\theoremstyle{plain}
\newtheorem{thm}{Theorem}[section]
\newtheorem{cor}[thm]{Corollary}
\newtheorem{pro}[thm]{Proposition}
\newtheorem{lem}[thm]{Lemma}
\newtheorem{proposition-principale}[thm]{Proposition principale}
\newtheorem{thm-principal}{Th\'eor\`eme principal}[section]
\theoremstyle{definition}
\newtheorem{defi}[thm]{Definition}
\newtheorem{rem}[thm]{Remark}
\newenvironment{congb}
{{\vs \noindent \bf Geometric Bogomolov Conjecture.--$\,$}\it}{\vs}
\newenvironment{thm-A}
{{\vs \noindent \bf Theorem A.$\,$}\it}{\vs}
\newenvironment{thm-B}
{{\vs \noindent \bf Theorem B.$\,$}\it}{\vs}
\newenvironment{thm-BB}
{{\vs \noindent \bf Theorem B'.$\,$}\it}{\vs}
\def\C{\mathbf{C}}
\def\R{\mathbf{R}}
\def\Q{\mathbf{Q}}
\def\Z{\mathbf{Z}}
\def\bk{\mathbf{k}}
\def\J{{\textsc{j}}}
\newcommand{\sA}{{\mathcal A}}
\newcommand{\sF}{{\mathcal F}}
\newcommand{\sL}{{\mathcal L}}
\newcommand{\sX}{{\mathcal X}}
\def\ess{\mathrm{ess}}
\def\End{{\sf{End}}}
\def\GL{{\sf{GL}}}
\def\SL{{\sf{SL}}}
\def\tr{{\sf{tr}}}
\def\dist{{\sf{dist}}}
\def\fieldchar{{\rm{char}\,}}
\numberwithin{equation}{section}       
\begin{document}

\setlength{\baselineskip}{0.53cm}        
%
%
\title[]
{The geometric Bogomolov conjecture}
\date{2018}

\author{Serge Cantat}
\address{Serge Cantat, IRMAR, Campus de Beaulieu,
b\^atiments 22-23
263 avenue du G\'en\'eral Leclerc, CS 74205
35042  RENNES C\'edex}
\email{serge.cantat@univ-rennes1.fr}

\author{Ziyang Gao}
\address{Ziyang Gao, CNRS, IMJ-PRG, 4 place de Jussieu, 75000 Paris, France; Department of Mathematics, Princeton University, Princeton, NJ 08544, USA}
\email{ziyang.gao@imj-prg.fr}

\author{Philipp Habegger}
\address{Philipp Habegger, Department of Mathematics and Computer Science, University of Basel, Spiegelgasse 1,4051 Basel, Switzerland}
\email{philipp.habegger@unibas.ch}

\author{Junyi Xie}
\address{Junyi Xie, IRMAR, Campus de Beaulieu,
b\^atiments 22-23
263 avenue du G\'en\'eral Leclerc, CS 74205
35042  RENNES C\'edex}
\email{junyi.xie@univ-rennes1.fr}

\thanks{The last-named author is partially supported by project ``Fatou'' ANR-17-CE40-0002-01, the first-named author by the french academy of sciences (fondation del Duca). The second- and third-named authors thanks the University of Rennes 1 for its hospitality, and the foundation del Duca for financial support. }

%
%

\maketitle
 

%
%
%
%

\begin{abstract}
We prove the geometric Bogomolov conjecture over a function field of characteristic zero.
\end{abstract}

%


\section{Introduction}

\subsection{The geometric Bogomolov conjecture}

\subsubsection{Abelian varieties and heights}

Let $\bk$ be an algebraically closed field.
Let $B$ be an irreducible normal projective variety over $\bk$ of dimension 
$d_B\geq 1$. Let $K:=\bk(B)$ be the function field of $B$.
Let $A$ be an abelian variety  defined over $K$  of dimension $g$. Fix an ample line bundle 
$M$ on $B$, and a symmetric ample line bundle $L$ on $A$.

Denote by $\hat{h}:A(\overline{K})\to [0,+\infty)$ the canonical
  height on $A$ with respect to $L$ and $M$ where $\overline{K}$ is an
  algebraic closure of $K$ (see Section~\ref{par:canonical height}).
For any irreducible subvariety $X$ of  $A_{\overline{K}}$ and any $\epsilon> 0$, we set 
\begin{equation}
X_{\epsilon}:=\{x\in X(\overline{K})|\,\, \hat{h}(x)<\epsilon\}.
\end{equation}

Set $A_{\overline{K}}= A\otimes_K\overline{K}$, and denote by $(A^{\overline{K}/\bk}, \tr)$ the $\overline{K}/\bk$-trace of  $A_{\overline{K}}$: it
is the final object of the category of pairs $(C,f)$, where $C$ is an abelian variety over $\bk$ and $f$ is 
a morphism from $C\otimes_{\bk}\overline{K}$ to  $A_{\overline{K}}$ (see~\cite{Lang1983}).
If $\fieldchar\bk=0$, $\tr$ is a closed immersion and $A^{\overline{K}/\bk}\otimes_{\bk}\overline{K}$ can be naturally viewed 
as an abelian subvariety of $A_{\overline{K}}$. By definition, a {\bf{torsion coset}} of $A$ is a translate $a+C$ of an abelian 
subvariety $C\subset A$ by a torsion point $a$. An irreducible subvariety $X$ of $A_{\overline{K}}$ is said to be {\bf{special}} if  
\begin{equation}
X=\tr(Y{\otimes_{\bk}\overline{K}})+T
\end{equation}
for some torsion coset $T$ of $A_{\overline{K}}$ and  some subvariety $Y$ of $A^{\overline{K}/\bk}$.
When $X$ is special,  $X_\epsilon$ is Zariski dense in $X$ for all $\epsilon >0$ (\cite[Theorem 5.4, Chapter 6]{Lang1983}).

\subsubsection{Bogomolov conjecture}

 The following conjecture was proposed by Yamaki \cite[Conjecture 0.3]{Yamaki2013}, but particular
 instances of it were studied earlier by Gubler in~\cite{Gubler2007}. It is an analog over function fields of the Bogomolov conjecture which 
 was proved by Ullmo \cite{Ullmo1998} and Zhang \cite{Zhang1998}.

\begin{congb} Let $X$ be an irreducible subvariety of $A_{\overline{K}}$. 
If $X$ is not special there exists $\epsilon>0$ such that $X_{\epsilon}$ is not Zariski dense in $X$.
\end{congb}

\smallskip

The aim of this paper is to prove the geometric Bogomolov conjecture over a function field of characteristic zero.

\smallskip

\begin{thm-A} Assume that $\bk$ is an algebraically closed field of characteristic $0$. 
Let $X$ be an irreducible subvariety of $A_{\overline{K}}$. If $X$ is not special then there exists $\epsilon>0$ such 
that $X_{\epsilon}$ is not Zariski dense in $X.$
\end{thm-A}

\vspace*{-0.13cm}

\subsubsection{Historical note}

Gubler proved the geometric Bogomolov conjecture in~\cite{Gubler2007} when $A$ is totally degenerate at some place of $K$.
When $\dim B=1$ and $X\subset A$ is a curve in its Jacobian, Yamaki proved it for nonhyperelliptic curves of genus $3$ in \cite{Yamaki2002} and for any hyperelliptic curve in \cite{Yamaki2008}. 
If moreover $\fieldchar \bk=0$, Faber \cite{Faber2009} proved it if $X$ is a curve of genus at most $4$ and Cinkir \cite{Cinkir2011} 
covered the case of arbitrary genus.
Later on Yamaki proved the cases $({\rm co})\dim X= 1$ \cite{Yamaki2017} and $\dim(A^{\overline{K}/{\bk}})\geq \dim(A)-5$ \cite{Yamaki2017a};
in \cite{Yamaki2016a}, he   reduced the conjecture to the case of  abelian varieties with trivial 
$\overline{K}/k$-trace and good reduction everywhere. 
In \cite{Habegger2013}, the third-named author gave a new proof of this conjecture in characteristic $0$ when  $A$ is the power of an elliptic curve and $\dim B =1$, introducing the original idea of considering the Betti map and its monodromy. 
Recently, the second and the third-named authors \cite{Gao2018} proved the  conjecture in the case $\fieldchar\bk=0$ and $\dim B=1$.

\subsection{An overview of the proof of Theorem~A}

\subsubsection{Notation}\label{par:Notation}

From now on, the algebraically closed field $\bk$ has characteristic $0$. 
There exists an algebraically closed subfield 
$\bk'$ of $\bk$ such that $B$, $A$, $X$, $M$ and $L$ are defined over 
 $\bk'$ and the transcendental degree of $\bk'$ over $\overline{\Q}$ is finite. 
In particular, $\bk'$ can be embedded in the complex field $\C$. Thus, 
\emph{in the rest of the paper, we assume $\bk=\C$} and we denote by $K$ the function field $\C(B)$.

Let $\pi:\sA\to B$ be an irreducible projective scheme over $B$ whose generic fiber is isomorphic to $A$. 
We may assume that $\sA$ is normal, and we fix an ample line bundle $\sL$ on $\sA$ such that $\sL|_A=L$.
For $b\in B$, we set $\sA_b:=\pi^{-1}(b)$. 
We denote by $e: B\dasharrow \sA$ the zero section and
by $[n]$ the multiplication by $n$ on $A$; it defines a rational mapping $\sA\dasharrow \sA$.

We may assume that $M$ is very ample, and we fix an embedding of $B$ in a projective space such that
the restriction of ${\mathcal{O}}(1)$ to $B$ coincides with $M$. The restriction of the Fubini-Study form
to $B$ is a K\"ahler form $\nu$.

Fix  a Zariski dense open subset $B^o$ of $B$ such that $B^o$ is smooth and $\pi|_{\pi^{-1}(B^o)}$ is smooth; then,
set $\sA^o:=\pi^{-1}(B^o)$. 

Let $X$ be a geometrically irreducible subvariety of $A$ such that $X_\epsilon$ is Zariski dense in $X$ for every $\epsilon >0$. We denote by $\sX$ 
 its Zariski closure in $\sA$, by $\sX^o$ its Zariski closure in $\sA^o$, and by $\sX^{o,reg}$ the regular locus of $\sX^o$. Our
 goal is to show that $X$ is special. 

\vspace*{-0.1cm}

\subsubsection{The main ingredients}

One of the main ideas of this paper is to consider the Betti foliation (see Section \ref{subsecbettimap}). It is
a smooth foliation of $\sA^o$ by holomorphic leaves, which is transverse to $\pi$. 
Every torsion point of $A$ gives local sections of  $\pi|_{\pi^{-1}(B^o)}$: these sections are local leaves of the Betti foliation, and this
property characterizes it. 

To prove Theorem~A, the {\bf first step} is to show that $\sX^o$ is 
invariant under the foliation when small points are dense in $X$. 
In other words, at every smooth point $x\in\sX^o$, the tangent space to the Betti foliation is contained in $T_x{\sX^o}$.
For this, we introduce a semi-positive closed $(1,1)$-form $\omega$ on $\sA^o$ which is canonically associated to $L$ and vanishes along the foliation.  
An inequality of Gubler implies that the canonical height $\hat{h}(X)$ of $X$ is $0$ when small points are dense in $X$; 
Theorem~B asserts that the  condition $\hat{h}(X)=0$ translates into 
\begin{equation}
\int_{\sX^{o}}\omega^{\dim X+1}\wedge (\pi^*\nu)^{m-1}=0
\end{equation} 
where $\nu$ is any K\"ahler form on the base $B^o$. 
From the construction of $\omega$, we deduce that $X$ is invariant
under the Betti foliation.

 The first step implies that the fibers of $\pi|_{\sX^o}$ are invariant 
under the action of the holonomy of the Betti foliation; the {\bf second step} shows that a subvariety 
of a fiber $\sA_b$ which is invariant under the holonomy is  the sum of a torsion coset  and a subset of $A^{\overline{K}/\bk}$. 
The conclusion easily follows from these two main steps. 
The second step already appeared in \cite{Habegger2013} and \cite{Gao2018}, but here, we make use of a more efficient dynamical argument
which may be derived from a result of Muchnik and is independent of the  Pila-Zannier's counting strategy.

\subsection{Acknowledgement} 


 The authors thank Pascal Autissier and Walter Gubler for providing comments and references. 

\section{The Betti form}\label{sectionbetti}

In this section, we define a foliation, and a closed $(1,1)$-form on $\sA^o$ which is 
naturally associated to the line bundle $L$.

\subsection{The local Betti maps}\label{subsecbettimap}

Let $b$ be a  point of $B^o$, and $U\subseteq B^o(\C)$ be a connected and simply connected open neighbourhood  of $b$ in 
the euclidean topology. Fix a basis of 
$H_1(\sA_b;\Z)$ and extend it by continuity to all fibers above $U$.
There is a natural real analytic diffeomorphism $\phi_U:\pi^{-1}(U)\to U\times\R^{2g}/\Z^{2g}$ 
such that 
\begin{enumerate}
\item $\pi_1\circ\phi_U=\pi$ where $\pi_1:U\times\R^{2g}/\Z^{2g}\to U$ is the projection to the first factor;
\item  for every $b\in U$, the map $\phi_U|_{\sA_{b}}:\sA_{b}\to \pi_1^{-1}(b)$ is an isomorphism of real Lie groups
that maps the basis of  $H_1(\sA_{b};\Z)$ onto the canonical basis of $\Z^{2g}$. 
\end{enumerate} 
For $b$ in $U$, denote by $i_b: \R^{2g}/\Z^{2g}\to U\times\R^{2g}/\Z^{2g}$ the inclusion $y\mapsto (b,y)$.
The {\bf{Betti map}} is  the $C^{\infty}$-projection $\beta_U^b\colon \pi^{-1}(U)\to \sA_b$ defined by
\begin{equation}
\beta_U^b:=(\phi_U|_{\sA_b})^{-1}\circ i_b\circ\pi_2\circ\phi_U 
\end{equation}
where $\pi_2:U\times\R^{2g}/\Z^{2g}\to \R^{2g}/\Z^{2g}$ is the projection to the second factor. 
Changing the  basis of $H_1(\sA_b;\Z)$, we obtain another trivialization $\phi'_U$ 
that is given by post-composing  $\phi_U$ with a
constant linear transformation 
\begin{equation}
(b,z)\in U\times\R^{2g}/\Z^{2g}\mapsto (b, h(z))
\end{equation}
for some element $h$ of the group $\GL_{2g}(\Z)$; thus, $\beta_U^b$ does not depend on $\phi_U$. 

Note that $\beta_U^b$ is the identity on $\sA_b$. 
In general, $\beta_U^b$ is not holomorphic. However, for every $p\in \sA_b$, $(\beta_U^{b})^{-1}(p)$ 
is a complex submanifold of $\sA^o$. (For instance, every section of $\pi|_{\pi^{-1}U}$ which is given 
by a torsion point provides a fiber of $\beta_U^b$, and continous limits of holomorphic sections are holomorphic.)

\subsection{The Betti foliation}

The local Betti maps determine a natural foliation $\sF$ on $\sA^o$: for every point $p$,  the local leaf $\sF_{U,p}$ 
through $p$ is the fiber $(\beta^{\pi(p)}_U)^{-1}(p)$. 
We call $\sF$ the {\bf{Betti foliation}}.
The leaves of $\sF$ are holomorphic, in the following sense: for every $p\in \sA^o$, the local leaf $\sF_{U,p}$ 
is a complex submanifold of $\pi^{-1}(U)\subset \sA^o$.  But a global leaf $\sF_p$ can be dense in $\sA^o$ for the
euclidean topology. Moreover, $\sF$ is everywhere transverse to the fibers of $\pi$, 
and $\pi|_{\sF_p}\colon \sF_p\to B^o$ is a regular holomorphic covering  for every point $p$ 
(it may have finite or infinite degree, and this may depend on $p$).

\begin{rem}\label{rem:torsion-algebraic-leaves}
The foliation $\sF$ is characterized as follows.
Let $q$ be a torsion point of $\sA_b$; it determines a multisection of the fibration $\pi$, obtained by analytic
continuation of $q$ as a torsion point in nearby fibers of $\pi$. This
multisection coincides with the leaf  $\sF_q$. There is a unique foliation of $\sA^o$ 
which is everywhere transverse to $\pi$ and whose set of leaves contains all those multisections. 
\end{rem}

\begin{rem}\label{rem:[n]-fol}
One can also think about $\sF$ dynamically. The endomorphism $[n]$ determines
 a rational transformation  of the model $\sA$ and induces a regular transformation of $\sA^o$. It preserves $\sF$, 
mapping leaves to leaves. Preperiodic leaves correspond to preperiodic points of $[n]$ in the fiber $\sA_b$; they are 
exactly the leaves given by the torsion points of $A$. 
\end{rem}

\begin{rem}\label{remtrivial} Assume that the family $\pi:\sA^o\to B^o$ is trivial, i.e. $\sA^o=B^o\times A_{\C}$ where $A_{\C}$ is an abelian variety over $\C$ and $\pi$ is the first projection. Then, the leaves of $\sF$ are exactly the fibers of the second projection. \end{rem}

\subsection{The Betti form}\label{subsec:bettiform}
The Betti form is introduced by Mok in \cite[pp.~374]{Mok11Form} to study the Mordell-Weil group over function fields. 
We hereby sketch the construction of this $(1,1)$-form. 
For $b\in B^o$, there exists a unique smooth $(1,1)$-form $\omega_b\in c_1(\sL|_{\sA_b})$ on $\sA_b$ 
which is invariant under translations.  
If we write $\sA_b=\C^g/\Lambda$ and denote by $z_1,\dots,z_{g}$ the standard coordinates of $\C^g$, 
then
\begin{equation}
\omega_b=\sum_{1\leq i, j\leq g}a_{i,j}dz_i\wedge d\bar{z_j}
\end{equation}
for some complex numbers $a_{i,j}$. This form $\omega_b$ is positive, because $\sL|_{\sA_b}$ is ample. 

Now, we define a smooth $2$-form $\omega$ on $\sA^o$. Let $p$ be a point of $\sA^o$. 
First, define $P_p\colon T_p\sA^o\to T_p\sA_{\pi(p)}$ to be  the projection onto the first factor in 
\begin{equation}
T_p\sA^o= T_p\sA_{\pi(p)}\oplus T_p\sF. 
\end{equation}
Since the tangent spaces $T_p\sF$ and $T_p\sA_{\pi(p)}$ are complex subspaces of $T_p\sA^o$, the map $P_p$ 
is a complex linear map. Then, for  $v_1$ and $v_2\in T_p{\sA^o}$ we set
\begin{equation}
\omega(v_1,v_2):=\omega_{\pi(p)}(P_p(v_1),P_p(v_2)).
\end{equation}
We call $\omega$ the {\bf{Betti form}}. By construction, $\omega|_{\sA_b}=\omega_b$ for every $b$.  Since $\omega_b$ is of type $(1,1)$ and  $P_p$ is $\C$-linear, $\omega$ is an antisymmetric form of type $(1,1)$. Since $\omega_b$ is positive, $\omega$ is semi-positive.

Let $U$ and $\phi_U$  be as in Section~\ref{subsecbettimap}. Let $y_i$, $i=1,\dots, 2g$, denote the standard coordinates of $\R^{2g}$.
Then there are real numbers $b_{i,j}$ such that 
\begin{equation}
(\phi_U^{-1})^{*}\omega = \sum_{1\leq i<j\leq 2g}b_{i,j}dy_i\wedge dy_j.
\end{equation}
It follows that $d((\phi_U^{-1})^{*}\omega)=0$ and that $\omega$ is closed. Moreover, $[n]^*\omega=n^2\omega$.
 Thus, we get the following lemma. 

\begin{lem}
The Betti form $\omega$ is a real analytic, closed, semi-positive $(1,1)$-form on $\sA^o$ such that $\omega|_{\sA_b}=\omega_b$ 
for every point $b\in B^o$. In particular, the cohomology class of $\omega|_{\sA_b}$ coincides with $c_1(\sL|_{\sA_b})$ for
every $b\in B^o$.
\end{lem}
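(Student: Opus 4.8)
The plan is to verify the asserted properties one by one, most of them being immediate from the construction of $\omega$, and to isolate closedness as the only point needing a genuine argument.

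First I would record that $\omega$ is an antisymmetric form of type $(1,1)$ which is semi-positive: this follows directly from the defining formula $\omega(v_1,v_2)=\omega_{\pi(p)}(P_p v_1,P_p v_2)$, from the fact that $P_p\colon T_p\sA^o\to T_p\sA_{\pi(p)}$ is $\C$-linear (both $T_p\sF$ and $T_p\sA_{\pi(p)}$ being complex subspaces of $T_p\sA^o$), and from the positivity of the translation-invariant form $\omega_b$; indeed $\omega(v,iv)=\omega_{\pi(p)}(P_pv,iP_pv)\geq 0$, with equality if and only if $v\in T_p\sF$, so in particular $\omega$ vanishes along $\sF$. The equality $\omega|_{\sA_b}=\omega_b$ is built into the construction, since $P_p$ restricts to the identity on $T_p\sA_{\pi(p)}$; consequently the cohomology class of $\omega|_{\sA_b}$ equals that of $\omega_b$, which is $c_1(\sL|_{\sA_b})$ by the defining property of $\omega_b$. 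This already gives the last sentence of the statement.

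Next I would treat closedness and real analyticity together using the charts $\phi_U$ of Section~\ref{subsecbettimap}. Unwinding the definition $\beta_U^b=(\phi_U|_{\sA_b})^{-1}\circ i_b\circ\pi_2\circ\phi_U$, one sees that in the coordinates given by $\phi_U$ the leaves of $\sF$ are exactly the slices $\{\pi_2\circ\phi_U=\mathrm{const}\}$. Since $\omega$ annihilates $T\sF$ and restricts on each fiber to the translation-invariant form $\omega_b$, its pull-back $(\phi_U^{-1})^*\omega$ contains no term with a base differential and no $dy_i\wedge dy_j$ coefficient depending on $y$, i.e.\ it has the shape $\sum_{1\leq i<j\leq 2g}b_{i,j}\,dy_i\wedge dy_j$. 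Because $U$ is simply connected and the integral basis of $H_1(\sA_b;\Z)$ was extended by continuity, the integral class $c_1(\sL|_{\sA_b})$ has constant coordinates in the dual basis $dy_1,\dots,dy_{2g}$, and $\omega_b$ is its unique translation-invariant, hence constant-coefficient, representative; therefore the $b_{i,j}$ are actual constants, $(\phi_U^{-1})^*\omega$ is a constant-coefficient $2$-form on $U\times\R^{2g}/\Z^{2g}$, and $d\omega=0$. For real analyticity, I would note that $\phi_U$ is a real-analytic diffeomorphism, because the period lattice of the family $\pi\colon\sA^o\to B^o$ varies holomorphically in $b$; hence $\omega=\phi_U^*\bigl(\sum_{i<j}b_{i,j}\,dy_i\wedge dy_j\bigr)$ is real analytic on $\pi^{-1}(U)$, and such open sets cover $\sA^o$.

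The step I expect to be the main obstacle is the passage from ``fibrewise closed'' to ``closed'', namely the claim that the coefficients $b_{i,j}$ do not depend on the base point of $U$; this is exactly where one uses that $c_1(\sL|_{\sA_b})$ is a locally constant integral class once an integral frame of $H_1$ has been fixed over the simply connected set $U$. Everything else is bookkeeping, and the relation $[n]^*\omega=n^2\omega$ recorded just above, although not needed for this lemma, drops out of $[n]^*dy_i=n\,dy_i$.
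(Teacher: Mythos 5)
Your proof is correct and follows the same route as the paper's: unwind the definition in the $\phi_U$-coordinates, observe that $\omega$ annihilates base directions and restricts to the translation-invariant form $\omega_b$ on fibers, and conclude that $(\phi_U^{-1})^*\omega = \sum_{i<j}b_{i,j}\,dy_i\wedge dy_j$ is constant and hence closed. The one place you go further than the paper is in justifying why the coefficients $b_{i,j}$ are independent of $b$ (via flatness of the integral class $c_1(\sL|_{\sA_b})$ in the trivialization $\phi_U$); the paper simply asserts they are real constants, and your argument correctly supplies the missing reason.
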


Since the monodromy of the foliation preserves the polarization $\sL_{\sA_b}$, it preserves $\omega_b$ and is contained in 
a symplectic group. 

\section{The canonical height and the Betti form}\label{sectioncanheight}

\subsection{The canonical height} \label{par:canonical height}
Recall that $K=\C(B)$. Let $X$ be any subvariety of $A_{\overline{K}}$. 
There exists a finite field extension $K'$ over $K$ such that $X$ is defined over $K'$; in other words, there exists a subvariety $X'$ of $A_{K'}$ such that $X=X'\otimes_{K'}\overline{K}.$
Let $\rho':B'\to B$ be the normalization of $B$ in $K'$. Set $\sA':=\sA\times_B B'$ and denote by $\rho: \sA'\to \sA$ the projection to the first factor; then, denote by $\sX'$ the Zariski closure of $X'$ in $\sA'.$
The {\bf{naive height}} of $X$ associated to the model $\pi\colon \sA\to B$ and the line bundles $\sL$ and $M$ is 
defined by the intersection number 
\begin{equation}
h(X)=\frac{1}{[K':K]}\left( \sX'\cdot c_1(\rho^*\sL)^{d_X+1}\cdot \rho^*\pi^*(c_1(M))^{d_B-1} \right)
\end{equation}
where $d_X=\dim X$ and $d_B=\dim B$. 
It depends on the model $\sA$ and the extension $\sL$ of $L$ to $\sA$ but it does not depend on the choice of~$K'$.

The {\bf{canonical height}} is the limit
\begin{equation}
\hat{h}(X)=\lim_{n\to +\infty} \frac{ h([n]_*X)}{n^{2(d_X+1)}}=\lim_{n\to +\infty} \frac{ \deg([n]|_X)h([n]X)}{n^{2(d_X+1)}}.
\end{equation}
It depends on $L$ but not on the model $(\sA, \sL)$; we refer to
Gubler's work \cite{Gubler:Pisa03} for more details. By \cite[Theorem 5.4, Chapter 6]{Lang1983}, the condition $\hat{h}(X)=0$ does not depend on $L$.
In particular, we may modify $\sL$ on special fibers to assume that {\sl{$\sL$ is ample}}.
See also \cite[Section 3]{Gubler2007}.

\medskip

Now we reformulate the canonical height in differential geometric terms.
For simplicity, assume that $X$ is already defined over $K$.
Set $\sA_1:=\sA$, $\pi_1:=\pi$ and $\sL_1:=\sL$.  Pick a K\"ahler form $\alpha_1$ in $c_1(\sL)$ (such a form exists because we choose $\sL$ ample).
For every $n\geq 1$, there exists an irreducible smooth projective scheme $\pi_n:\sA_n\to B$  over $B$, extending $\pi|_{\sA^o}:\sA^o\to B^o$, such that
the rational map $[n]:\sA^o\to \sA^o$ lifts to a morphism $f_n:\sA_n\to \sA$ over $B$.
Write $\sL_n:=f_n^*\sL$ and $\alpha_n:=f_n^*\alpha_1$.
Denote by $\sX_n$ the Zariski closure of $\sX^o$ in $\sA_n$.
Since the K\"ahler form $\nu$ introduced in Section~\ref{par:Notation} represents the class $c_1(M)$, the projection formula gives 
\begin{eqnarray}
\hat{h}(X) & = & \lim_{n\to \infty}n^{-2(d_X+1)}(\sX_n\cdot \sL_n^{d_X+1}\cdot (\pi_n^*M)^{d_B-1})\\
& = &\lim_{n\to \infty}n^{-2(d_X+1)}\int_{\sX_n}\alpha_n^{d_X+1}\wedge (\pi_n^*\nu)^{d_B-1}\\
& = &\lim_{n\to
  \infty}n^{-2(d_X+1)}\int_{\sX^{o}}([n]^*\alpha)^{d_X+1}\wedge
(\pi^*\nu)^{d_B-1}
\end{eqnarray}
because the integral on $\sX_n$ is equal to the integral on the dense Zariski open subset $\sX^o$ (and even 
on the regular locus $\sX^{o, reg}$).

\subsection{Gubler-Zhang inequality} 

By definition, the {\bf{essential height}} $\ess(X)$  of a subvariety $X\subset A$ is the real number
\begin{equation}
\ess(X)=\sup_Y\inf_{x\in X(\overline{K})\setminus Y}\hat{h}(x),
\end{equation}
 where $Y$ runs through all proper Zariski closed subsets of $X$.
The following inequality is due to Gubler in \cite[Lemma 4.1]{Gubler2007}; it is an analogue of Zhang's inequality \cite[Theorem 1.10]{Zhang1995} over number fields.
\begin{equation} \label{ineq:Gubler-Zhang}
0\leq \frac{\hat{h}(X)}{(d_X+1)\deg_L(X)}\leq \ess(X).
\end{equation}
The converse inequality $\ess(X)\leq \hat{h}(X)/\deg_L(X)$ also holds, but we shall not use it in this article. 

\begin{defi}We say that $X$ is {\bf{small}}, if $X_{\epsilon}$ is Zariski dense in $X$ for all $\epsilon>0$.
\end{defi}

The above inequalities comparing $\hat{h}(X)$ to $\ess(X)$ show that $X$ is small if, and only if $\hat{h}(X)=0.$

\begin{pro}\label{prosmall} Let $g:A\to A'$ be a morphism of abelian varieties over $K$, and let $a\in A(K)$ be a torsion point. Let $X$ be an absolutely irreducible subvariety of  $A$ over $K$.
\begin{enumerate}
\item If $X$ is small, then $g(X)$ is small.
\item If $g$ is an isogeny and $g(X)$ is small, then $X$ is small.
\item  $X$ is small if and only if $a+X$ is small.
\end{enumerate}
\end{pro}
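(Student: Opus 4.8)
The plan is to prove Proposition~\ref{prosmall} by using the equivalence ``$X$ small $\iff \hat h(X)=0$'' established just above, so that everything reduces to controlling how the canonical height behaves under the operations in (1)--(3). The key tool is functoriality of canonical heights of points (and of subvarieties) under morphisms of abelian varieties, together with the fact that for an isogeny the canonical height is comparable up to a bounded multiplicative constant. Concretely, for a morphism $g\colon A\to A'$ and the symmetric ample line bundles $L$ on $A$, $L'$ on $A'$ chosen to define the heights, one has $\hat h_{g^*L'}(x)=\hat h_{L'}(g(x))$ for all $x\in A(\overline K)$; and when $g$ is an isogeny, $g^*L'$ is again symmetric ample, so $\hat h_{g^*L'}$ and $\hat h_L$ are each bounded above by a constant times the other (both vanish on exactly the same points, namely on $A^{\overline K/\bk}(\bk)+A_{\mathrm{tors}}$, by \cite[Theorem 5.4, Chapter 6]{Lang1983}, which also tells us smallness does not depend on the polarization). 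Finally, $\hat h$ is a quadratic form up to the symmetric/antisymmetric decomposition, and on torsion translates the linear part is killed, so $\hat h_L(a+x)=\hat h_L(x)$ when $a$ is torsion.

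First I would treat (1): if $X$ is small then $\hat h_L(X)=0$. Pick $Y\subset X$ a proper closed subset with $\inf_{x\in X\setminus Y}\hat h_L(x)<\epsilon$; its image $g(Y)$ is a proper closed subset of $g(X)$ (after possibly enlarging $Y$ so that $g|_{X\setminus Y}$ has image avoiding a chosen proper closed subset of $g(X)$, using that $g(X)$ is irreducible of dimension $\le \dim X$). Then for $x\in X\setminus Y$ with small height, $g(x)\in g(X)\setminus g(Y)$ has height $\hat h_{L'}(g(x))=\hat h_{g^*L'}(x)\le C\,\hat h_L(x)$ for a constant $C$ depending only on $g,L,L'$ (comparison of two canonical heights attached to different ample classes on $A$ — both are $\le$ a multiple of any fixed ample one). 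Hence $\ess(g(X))=0$, so $\hat h_{L'}(g(X))=0$ by Gubler--Zhang, i.e. $g(X)$ is small. For (2): if $g$ is an isogeny and $g(X)$ is small, run the same argument with the reverse comparison $\hat h_{g^*L'}(x)\ge c\,\hat h_L(x)$ (valid because $g^*L'$ is ample when $g$ is finite, and $\hat h_L\le C'\hat h_{g^*L'}$), giving $\ess(X)=0$ hence $X$ small. For (3): apply the translation invariance $\hat h_L(a+x)=\hat h_L(x)$ for torsion $a$ directly at the level of points — $x\mapsto a+x$ is a bijection $X(\overline K)\to (a+X)(\overline K)$ carrying proper closed subsets to proper closed subsets and preserving heights — so $\ess(X)=\ess(a+X)$, and we conclude by Gubler--Zhang in both directions.

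An alternative, perhaps cleaner, packaging is to argue entirely with subvariety heights rather than essential heights: use that $\hat h_{g^*L'}$ and $\hat h_L$ are comparable up to multiplicative constants on $A(\overline K)$ when $g$ is an isogeny, lift this to the formula $\hat h(X)$ via the defining limit $\lim_n n^{-2(d_X+1)} h([n]_*X)$, and note that the projection formula gives $\hat h_{g^*L'}(X)$ in terms of $\hat h_{L'}(g(X))$ up to the degree factor $\deg(g|_X)$. But since the statement is phrased with ``small'' and we already have the clean equivalence with $\hat h(X)=0$, going through $\ess$ and Gubler--Zhang is the most economical route and avoids having to track intersection-number bookkeeping.

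The main obstacle I anticipate is the bookkeeping in (1) around images of proper closed subsets: one must ensure that a ``density of small points off $Y$'' statement for $X$ produces a genuine ``density of small points off a proper closed subset'' statement for $g(X)$, which requires that $g(X\setminus Y)$ is not contained in a proper closed subset of $g(X)$ — true because $g|_X\colon X\to g(X)$ is dominant and $X\setminus Y$ is Zariski dense in $X$, so its image is Zariski dense in $g(X)$, but this needs to be said. The other point needing care is the existence of the two-sided comparison constant between canonical heights for two ample symmetric classes on the same abelian variety (for (2)), and the fact that $g^*L'$ is ample precisely when $g$ is finite; these are standard (the Néron--Tate height for any ample class is bounded by a multiple of that for any other ample class, since any ample class is dominated by a multiple of another ample class), but they are what makes the isogeny hypothesis in (2) essential.
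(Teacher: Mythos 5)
Your proof is correct, and it takes a genuinely different route from the paper's. The paper dispenses with (1) and (2) by citing Yamaki's Proposition~2.6 in~\cite{Yamaki2016}, and then deduces (3) from (1) and (2) by the clean trick of applying $[n]$ with $na=0$: smallness of $a+X$ is equivalent to smallness of $[n](a+X)=[n](X)$, which in turn is equivalent to smallness of $X$. You instead give a self-contained argument. For (1) and (2) you compare $\hat h_{g^*L'}$ with $\hat h_L$ through the Gubler--Zhang equivalence ``small $\iff \ess=0$'' and a nef/ample comparison, with the isogeny hypothesis entering precisely to make $g^*L'$ ample so that the comparison is two-sided; this is essentially a proof of the content of the cited lemma, at the cost of invoking a few standard positivity facts about canonical heights (in particular that $\hat h_N\ge 0$ for a symmetric nef $N$ and that two ample symmetric classes give mutually dominating heights). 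For (3) you bypass the $[n]$-trick entirely by using the quadratic nature of $\hat h_L$ to get $\hat h_L(a+x)=\hat h_L(x)$ for torsion $a$, which directly gives $\ess(a+X)=\ess(X)$. Both packagings are sound; the paper's is shorter once the Yamaki citation is granted and its proof of (3) is arguably more elementary, while yours is more explicit and keeps everything internal. One small phrasing issue: in (1) it is not $g(Y)$ being proper that matters (it need not be); the clean formulation, which you do acknowledge at the end, is to start from an arbitrary proper closed $Y'\subset g(X)$, take $Y=g^{-1}(Y')\cap X$ (proper because $g|_X$ is dominant), and push small points of $X\setminus Y$ forward to $g(X)\setminus Y'$.
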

\proof
Assertions (1) and (2) follow from \cite[Proposition 2.6.]{Yamaki2016}. 
To prove the third one fix an integer $n\geq 1$ such that $na=0$. 
By assertions (1) and (2),  $a+X$ is small if and only if $[n](a+X)=[n](X)$ is small,  if and only if $X$ is small.
\endproof

\subsection{Smallness and the Betti form} 

Here is the key relationship between the density of small points and the Betti form. 

\smallskip

\begin{thm-B} Let $X$ be an absolutely irreducible subvariety of  $A$ over $\C(B)$.
If $X$ is small,  then  
\[
\int_{\sX^{o}}\omega^{d_X+1}\wedge (\pi^*\nu)^{d_B-1}=0,
\]
with $\omega$ the Betti form associated to $L$ and $\nu$ the K\"ahler form on $B$ representing
the class $c_1(M)$.
\end{thm-B}

\begin{proof}

Since $X$ is small, $\hat{h}(X)=0$ and
equation (3.5) shows that 
\begin{equation}\label{eq:proofthmB}
0=\hat{h}(X)=\lim_{n\to
  \infty}n^{-2(d_X+1)}\int_{\sX^{o}}([n]^*\alpha)^{d_X+1}\wedge
(\pi^*\nu)^{d_B-1}.
\end{equation}
 
Let $U\subset  B^{o}$ be any relatively compact open subset of $B^o$ in the euclidean topology.
There exists a constant $C_U>0$ such that $C_U\alpha-\omega$ is semi-positive on $\pi^{-1}(U)$.
Since $[n]:\sA^o\to \sA^o$ is regular, the $(1,1)$-form $n^{-2}[n]^*(C_U\alpha-\omega)=C_Un^{-2}[n^*]\alpha-\omega$ is semi-positive.
Since $\omega$ and $\nu$ are semi-positive, we get
\begin{equation*}
0\leq \int_{\pi^{-1}(U)\cap\sX^{o}}\omega^{d_X+1}\wedge
(\pi^*\nu)^{d_B-1}
\leq \left(\frac{C_U}{n^2}\right)^{d_X+1}
\int_{\sX^{o}}([n]^*\alpha)^{d_X+1}\wedge (\pi^*\nu)^{d_B-1}
\end{equation*}
for all $n\geq 1$. Letting $n$ go to $+\infty$, equation~\eqref{eq:proofthmB} gives 
\begin{equation}
\int_{\pi^{-1}(U)\cap\sX^{o}}\omega^{d_X+1}\wedge (\pi^*\nu)^{d_B-1}=0.
\end{equation}
Since this holds for all relatively compact subsets $U$ of $B^{o}$, the theorem is proved. 
\end{proof}

\begin{cor}\label{corlocalintegral}Assume that $X$ is small.
Let $U$ and $V$ be open subsets of $B^o$ and $\sX^o$ with respect to the euclidean topology such that 
$U$ contains the closure of $\pi(V)$. 
Let $\mu$ be any  smooth real semi-positive $(1,1)$-form on $U$. We have 
\[
\int_{V}\omega^{d_X+1}\wedge (\pi^*\mu)^{d_B-1}=0.
\]
\end{cor}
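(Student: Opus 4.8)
The plan is to deduce Corollary~\ref{corlocalintegral} from Theorem~B by a purely local argument on the base, using semi-positivity to control the integrand by the one we already know vanishes.

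\smallskip

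\emph{Step 1: reduction to a comparison of forms on $U$.} Since $U$ is an open subset of $B^o$ and $\overline{\pi(V)}\subset U$ is compact (it is a closed subset of $U$, and since it is the closure of $\pi(V)$ with $V\subset\sX^o$, it is also bounded in the euclidean topology, hence compact), I would work on a slightly smaller relatively compact open set: pick a relatively compact open $W$ with $\overline{\pi(V)}\subset W\Subset U$. On $\overline{W}$, which is compact, both $\mu$ and the K\"ahler form $\nu$ from Section~\ref{par:Notation} are continuous; since $\nu$ is strictly positive, there is a constant $C=C_W>0$ with $C\nu-\mu\geq 0$ on $W$ (as $(1,1)$-forms, i.e.\ $C\nu-\mu$ semi-positive there). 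This is the only place compactness is used.

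\smallskip

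\emph{Step 2: wedge with $\omega^{d_X+1}$ and integrate.} On $V$ we have $\pi(V)\subset W$, so $\pi^*(C\nu-\mu)$ is a semi-positive $(1,1)$-form on $V$. Because $\omega$ is semi-positive of type $(1,1)$ on $\sA^o$, the product $\omega^{d_X+1}\wedge(\pi^*(C\nu-\mu))^{d_B-1}$ is a semi-positive form of top degree on the $(d_X+1+d_B-1)$-dimensional manifold $\sX^{o,reg}$ (using that a wedge of semi-positive $(1,1)$-forms is semi-positive — this is where one should be a little careful, and I return to it below). Hence
\[
0\leq \int_V \omega^{d_X+1}\wedge(\pi^*\mu)^{d_B-1}
\leq C^{\,d_B-1}\int_V \omega^{d_X+1}\wedge(\pi^*\nu)^{d_B-1}
\leq C^{\,d_B-1}\int_{\pi^{-1}(W)\cap\sX^o}\omega^{d_X+1}\wedge(\pi^*\nu)^{d_B-1}.
\]
Here the first inequality is positivity of the integrand, the second uses $\mu\leq C\nu$ pulled back and wedged against the semi-positive $\omega^{d_X+1}$, and the third enlarges the domain of integration of a nonnegative integrand from $V$ to $\pi^{-1}(W)\cap\sX^o$ (note $V\subset\sX^o$ and $\pi(V)\subset W$). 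By the proof of Theorem~B — precisely equation~(3.12), which was established for every relatively compact open subset of $B^o$, and in particular for $W$ — the right-hand side is $0$. Therefore the left-hand integral vanishes, which is the assertion.

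\smallskip

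\emph{Main obstacle.} The genuinely delicate point is the claim that a finite wedge product of smooth semi-positive real $(1,1)$-forms is again semi-positive (so that the integrals above are nonnegative and monotone in the domain). This is true: at each point, after diagonalizing one of the factors one reduces to products of the shape $\left(\sum \lambda_k\, dz_k\wedge d\bar z_k\right)$ against other semi-positive forms, and an induction on the number of factors — or the standard fact that the cone of semi-positive $(1,1)$-forms is closed under wedge product in the top-degree range — gives the result; one should cite this or spell out the pointwise linear-algebra lemma. Everything else is routine: the comparison $\mu\le C\nu$ is compactness plus positivity of $\nu$, and the vanishing of the majorizing integral is literally a line from the proof of Theorem~B. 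A minor bookkeeping point is that all these integrals should be understood on the smooth locus $\sX^{o,reg}$, which carries full measure, exactly as in Section~\ref{sectioncanheight}.
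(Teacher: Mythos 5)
Your proof is correct and follows essentially the same route as the paper's: compare $\mu$ to $C\nu$ via semi-positivity of $C\nu-\mu$ on the relevant compact, then let Theorem~B kill the majorizing integral of $\omega^{d_X+1}\wedge(\pi^*\nu)^{d_B-1}$. The only (harmless) deviations are that you route through a relatively compact $W$ and through equation~(3.12) rather than applying Theorem~B directly (the paper uses $\int_V\omega^{d_X+1}\wedge(\pi^*\nu)^{d_B-1}\le\int_{\sX^o}\omega^{d_X+1}\wedge(\pi^*\nu)^{d_B-1}=0$), and that the compactness of $\overline{\pi(V)}$ is best justified by the projectivity, hence compactness, of $B$ rather than by an appeal to ``boundedness''.
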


\proof[Proof of the Corollary]
Since $\omega$ and $\mu$ are semi-positive, the integral is non-negative.
Since $\nu$ is strictly positive on $U$,  there is a constant $C>0$ such that $C\nu-\mu$ is semi-positive.
From Theorem~B we get
\begin{equation}
0\leq \int_{V}\omega^{d_X+1}\wedge (\pi^*\mu)^{d_B-1}\leq
C^{d_B-1}\int_{V}\omega^{d_X+1}\wedge (\pi^*\nu)^{d_B-1}=0,
\end{equation}
and the conclusion follows. \endproof

\begin{thm-BB} Assume that $X$ is small.
Then at every point $p\in \sX^o$, we have 
$T_p\sF\subseteq T_p{\sX^o}$. In other words, $\sX^o$ 
is invariant under the Betti foliation: for every  $p\in \sX^o$, 
the leaf $\sF_p$ is contained in $\sX^o$. 
\end{thm-BB}

\proof
We start with a simple remark. Let $P\colon \C^{N+1}\to \C^N$ be a complex linear map of rank $N$.
Let $\omega_0$ be a positive $(1,1)$-form on $\C^N$. If $V$ is a complex linear subspace of $\C^{N+1}$
of dimension $N$, then $\ker(P)\subset V$ if and only if $P\vert V$ is not onto, if and only if $(P^*\omega_0^N)\vert V =0$.
Now, assume that $B$ has  dimension $1$. Then, the integral of $\omega^{d_X+1}$ on $\sX^o$ vanishes; since
the form $\omega$ is non-negative, the remark implies that the kernel
of $P_p$ from Section~\ref{subsec:bettiform} is contained in $T_p\sX^o$ at every smooth point $p$
of~$\sX^o$. This proves the proposition when $d_B=1$. 

The general case reduces to $d_B=1$ as follows. 
Let $U$ and $U'$ be open subsets   of $B^o(\C)$ such that: (i) ${\overline{U}}\subset U'$ in the euclidean topology 
and (ii) there are  complex coordinates $(z_j)$ on $U'$ such that
$U=\{|z_j|<1, j=1,\dots,d_B\}$.
Set 
\begin{equation}
\mu:=i(dz_2\wedge d\overline{z_2}+\ldots +dz_{d_B}\wedge d\overline{z_{d_B}}).
\end{equation}
It is a smooth real non-negative $(1,1)$-form on $U'$. By Corollary \ref{corlocalintegral}, we have 
\begin{equation}
\int_{\pi^{-1}(U)\cap \sX}\omega^{d_X+1}\wedge (\pi^*\mu)^{d_B-1}=0.
\end{equation}
For $(w_2, \dots, w_{d_B})$ in $\C^{d_B-1}$ with norm $\vert w_i\vert<1$ for all $i$,  consider the 
slice 
\begin{equation}
\sX(w_2,\dots,w_{d_B})=\sX\cap\pi^{-1}(U\cap \{z_2=w_2,\dots,
z_{d_B}=w_{d_B}\});
\end{equation}
this slice provides a family of subsets of $\sA$ over the one-dimensional 
disk $\{(z_1,w_2, \ldots, w_{d_B})\; ; \; \vert z_1\vert <1\}$. Then, the integral of $\omega^{d_X+1}$
over $\sX(w_2, \dots, w_{d_B})$ vanishes for almost every point $(w_2,
\dots, w_{d_B})$; from the case $d_B=1$, 
we deduce that, at every point $p$ of $\sX^o\cap \pi^{-1}U$, the tangent $T_p\sX^o$ intersects $T_p\sF$ 
on a line whose projection in $T_{\pi(p)}B$ is the line $\{z_2=\cdots =z_{d_B}=0\}$. Doing the same for all coordinates $z_i$, 
we see that $T_p\sF$ is contained in $T_p\sX^o$.  \endproof

As a direct application of Theorem~B' and Remark \ref{remtrivial}, we prove Theorem~A in the isotrivival case.

\begin{cor}\label{cortripro} If $A_{\overline{K}}=A^{\overline{K}/\C}\otimes_{\C}\overline{K}$ and $X$ is small, then there exists a subvariety $Y\subseteq A^{\overline{K}/\C}$ such that $X\otimes_{K}\overline{K}=Y\otimes_{\C}\overline{K}.$
\end{cor}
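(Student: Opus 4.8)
\textbf{Proof proposal for Corollary~\ref{cortripro}.}
The plan is to combine Theorem~B' with the explicit description of the Betti foliation in the trivial (isotrivial) case given in Remark~\ref{remtrivial}. Since $A_{\overline{K}}=A^{\overline{K}/\C}\otimes_\C\overline{K}$, after a finite base change $B'\to B$ the family $\pi\colon\sA'\to B'$ becomes birational over $B'$ to the constant family $B'\times A_{\C}$, where $A_{\C}:=A^{\overline{K}/\C}$. Passing to a Zariski dense open subset $B^{o}$ over which everything is smooth, we may assume $\sA^{o}=B^{o}\times A_{\C}$ with $\pi$ the first projection; this is harmless because smallness, the Betti foliation, and the Betti form are all compatible with finite \'etale base change (and with restriction to $\sA^{o}$), and by Proposition~\ref{prosmall}(2) smallness of $X$ is preserved under the isogeny induced by such a base change.

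Next I would apply Theorem~B'. Since $X$ is small, so is its pullback (or image under the relevant isogeny) to the constant model, and Theorem~B' asserts that $\sX^{o}$ is invariant under the Betti foliation $\sF$: for every $p\in\sX^{o}$ the leaf $\sF_p$ is contained in $\sX^{o}$. By Remark~\ref{remtrivial}, in the trivial family $\sA^{o}=B^{o}\times A_{\C}$ the leaves of $\sF$ are exactly the ``horizontal'' slices $B^{o}\times\{a\}$, $a\in A_{\C}$. Therefore $\sX^{o}$ is a union of such slices, which forces $\sX^{o}=B^{o}\times Y_0$ for some subset $Y_0\subseteq A_{\C}$; taking Zariski closures, $\sX=B\times Y$ where $Y$ is the Zariski closure of $Y_0$ in $A_{\C}$, an irreducible subvariety (irreducibility of $Y$ follows from that of $X$). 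Reading off the generic fiber gives $X'=Y\otimes_{\C}K'$ over the function field $K'=\C(B')$, and hence $X\otimes_K\overline{K}=X'\otimes_{K'}\overline{K}=Y\otimes_\C\overline{K}$, which is the desired conclusion.

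The only genuine point requiring care is the reduction to an honestly trivial family: the isomorphism $A_{\overline{K}}\cong A^{\overline{K}/\C}\otimes_\C\overline{K}$ is defined only over some finite extension $K'$ of $K$, so a priori one obtains $\sA'\to B'$ birational, not isomorphic, to the constant family, and one must shrink to a common open subset $B^{o}$ over which the birational map is an isomorphism and the data ($\pi$, $\sL$, $\sX$, $\sF$, $\omega$) all match up. Since Theorem~B' and Remark~\ref{remtrivial} are both statements about $\sA^{o}$ over $B^{o}$, this shrinking costs nothing, and the descent of the conclusion from $K'$ back to $K$ is immediate because $X$ was assumed defined over $K$ to begin with (so $X'$ is the base change of $X$, and $Y\otimes_\C\overline{K}$ is already defined over $K$). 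I expect this bookkeeping with the finite base change to be the main, though entirely routine, obstacle; everything else is a direct quotation of Theorem~B' and Remark~\ref{remtrivial}.
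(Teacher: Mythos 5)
Your proposal is correct and follows essentially the same route as the paper: pass to a finite extension $K'$ and normalize $B$ so the family becomes trivial over $B^o$, then apply Theorem~B' together with Remark~\ref{remtrivial} to conclude that $\sX^o$ is saturated by horizontal slices. One small misattribution: the stability of smallness under the finite base change $K\to K'$ is not an instance of Proposition~\ref{prosmall}(2) (there is no isogeny here), but is built into the definition of the canonical height via the $\tfrac{1}{[K':K]}$ normalization; this does not affect the validity of the argument.
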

\proof
Replacing $K$ by a suitable finite extension $K'$ and then  $B$ by its normalization in $K'$, we may assume that  $\sA^o=  B^o\times A^{\overline{K}/\C}$ and that $\pi\colon \sA^o\to B$ is the projection to the first factor. By Remark \ref{remtrivial}, the leaves of the Betti foliation are exactly the fibers of the projection $\pi_2$ onto the second factor.  Since $X$ is small, Theorem~B' shows that $\sX=\pi_2^{-1}(Y)$, with $Y:=\pi_2(\sX).$
\endproof
 
\section{Invariant analytic subsets of real and complex tori}\label{sectiondynainva}

Let $m$ be a positive integer. Let $M=\R^m/\Z^m$ be the torus of dimension $m$ and
$\pi\colon \R^m\to M$ be the natural projection.  The group $\GL_{m}(\Z)$ acts by real 
analytic homomorphisms on $M$. In this section, we study analytic subsets of $M$ which are invariant under the
action of  a subgroup $\Gamma\subset \SL_m(\Z)$. The main ingredient is a result of Muchnik 
and of Guivarc'h and Starkov. 

\subsection{Zariski closure of $\Gamma$} \label{par:notations-gamma-G}

We denote by
\begin{equation}
G={\mathrm{Zar}}(\Gamma)^{irr}
\end{equation}
 the neutral component, for 
 the Zariski topology, of the Zariski closure of $\Gamma$ in $\GL_m(\R)$.
 We shall assume that $G$ is semi-simple.
The real points $G(\R)$ form a real Lie group, and the neutral component in the euclidean topology 
is denoted $G(\R)^+$. 
Let $\Gamma_0$ be the intersection of $\Gamma$ with $G(\R)^+$; then $\Gamma_0$ is both contained in $\GL_m(\Z)$ and
Zariski dense in $G$: every polynomial equation that vanishes identically on $\Gamma_0$ vanishes also on $G$.
But the Zariski closure of $\Gamma_0$ in $\GL_m(\R)$ may be larger than $G(\R)^+$ (it may include other connected components). 

We shall denote by $V$ the vector space $\R^m$;  the lattice $\Z^m$ determines an integral, hence a rational structure on $V$.
The Zariski closures ${\mathrm{Zar}}(\Gamma)$ and ${\mathrm{Zar}}(\Gamma_0)$ are $\Q$-algebraic subgroups of $\SL_m$ for
this rational structure.

We shall say that $\Gamma$ (or $G$) has {\bf{no trivial factor}} if every $G$-invariant vector $u\in V$ is equal to $0$. Note that
this notion depends only on $G$, not on $\Gamma$.

\subsection{Results of Muchnik and Guivarc'h and Starkov}\label{par:Muchnik}
Assume that $V$ is an irreducible representation of $G$ over $\Q$; this means
that every proper $\Q$-subspace of $V$ which is $G$-invariant is the trivial subspace $\{0\}$.
We decompose $V$ into irreducible subrepresentations of $G$ over $\R$, 
\begin{equation}
V=W_1\oplus W_2\oplus \cdots \oplus W_s.
\end{equation}
To each $W_i$ corresponds a subgroup $G_i$ of $\GL(W_i)$ given by the restriction of the action of $G$ 
to $W_i$. Some of the groups $G_i(\R)$ may be compact, and we denote by $V_c$ the sum of the corresponding
subspaces: $V_c$ is the maximal $G$-invariant subspace of $V$ on which $G(\R)$ acts by a compact factor. 
It is a proper subspace of $V$; indeed, if $V_c$ were equal to $V$ then $G(\R)$ would be compact, $\Gamma$ would be finite, 
and $G$ would be trivial (contradicting the non-existence of trivial factor).

\begin{thm}[Muchnik \cite{Muchnik2005}; Guivarc'h and Starkov \cite{Guivarch2004}]\label{thm:Muchnik}
Assume that $G$ is semi-simple, and its representation on $\Q^m$ is irreducible. 
Let $x$ be an element of $M$. Then, one of the following two exclusive properties occur
\begin{enumerate}
\item the $\Gamma$-orbit of $x$ is dense in $M$; 
\item there exists a torsion point $a\in M$ such that $x\in a+\pi (V_c)$.
\end{enumerate}
\end{thm}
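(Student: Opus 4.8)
The statement is a dichotomy for $\Gamma$-orbits on the torus $M=\R^m/\Z^m$, so the natural strategy is to pass to the dual: the closure of the $\Gamma$-orbit of $x$ is governed by the $\Gamma$-action on the Pontryagin dual $\widehat{M}\cong\Z^m$, which sits inside $V^*=\R^m$ with its dual rational structure. The first step is to recall the Fourier/duality criterion for orbit closure: a closed $\Gamma$-invariant set $Z\ni x$ is \emph{not} dense precisely when there is a nonzero character $\chi\in\widehat{M}$ constant on the $\Gamma$-orbit of $x$, equivalently when the stabilizer data forces $\langle\gamma\cdot\chi - \chi, x\rangle \in \Z$ for all $\gamma\in\Gamma$ and a fixed nonzero $\chi$. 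So the whole problem reduces to a statement about $\Gamma$-orbits in the \emph{integral} lattice $\Z^m\subset V^*$ together with the rationality of certain pairings against $x$. This is exactly where the hypotheses enter: $G$ semisimple and $V$ $\Q$-irreducible mean that the only $\Gamma$-invariant rational subspaces of $V^*$ are $0$ and $V^*$, and that the $\Gamma$-orbit of a nonzero lattice vector cannot be contained in a proper rational subspace.

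\textbf{Key steps.} (i) Translate non-density of the $\Gamma$-orbit of $x$ into the existence of a nonzero $\chi_0\in\Z^m$ such that, on the $\Gamma$-orbit closure $\overline{\Gamma\cdot\chi_0}$ in $V^*$ (a $\Gamma$-invariant, hence eventually $G(\R)^+$-invariant, subset), the function $y\mapsto \langle y,x\rangle \bmod \Z$ is locally constant along $G(\R)^+$-orbits. (ii) Use the Guivarc'h--Starkov / Muchnik description of $G(\R)^+$-orbit closures of lattice points: because $V$ is $\Q$-irreducible and $G$ is semisimple, the Zariski closure of $\Gamma\cdot\chi_0$ is the full affine span, and the \emph{euclidean} closure of the $G(\R)^+$-orbit of any $\chi_0$ outside the compact factor is ``large'' — it contains a neighborhood of $0$ within a $G(\R)^+$-invariant complement of $V_c^*$. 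Here one invokes the fact that a semisimple real Lie group with no compact factor acting on a noncompact irreducible $W_i$ has all orbits (away from $0$) unbounded with nontrivial topological closure, so that the set $\{\gamma\cdot\chi_0 - \gamma'\cdot\chi_0\}$ spans arbitrarily small vectors in the noncompact part. (iii) Conclude: if $x$ had a nonzero component transverse to $\pi(V_c)$ and were not a torsion point there, then the pairing $\langle\gamma\cdot\chi_0 - \gamma'\cdot\chi_0,\,x\rangle$ would take values dense in $\R$, contradicting that it is $\Z$-valued (or constant mod $\Z$) — forcing either density (case (1)) or $x\in a+\pi(V_c)$ with $a$ torsion (case (2)). (iv) Check exclusivity: if $x\in a+\pi(V_c)$ with $a$ torsion and $V_c\neq V$, then the image of $\Gamma\cdot x$ in $M/\pi(V_c)$ is finite, so the orbit is not dense; and $V_c=V$ is excluded by the no-trivial-factor hypothesis as already noted in the text, so case (2) always genuinely fails density.

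\textbf{Main obstacle.} The crux is step (ii) — extracting, from semisimplicity and $\Q$-irreducibility, the precise statement that the euclidean closure of the $G(\R)^+$-orbit of a nonzero integral vector $\chi_0$ is a union of $G(\R)^+$-orbits whose differences are dense in the noncompact factor $V^*/V_c^*$. This is genuinely the content of the theorems of Muchnik and of Guivarc'h--Starkov and is not formal; it rests on Ratner-type or Dani--Margulis unipotent-flow equidistribution, or on Muchnik's direct random-walk/boundary argument, applied after decomposing $V=\bigoplus W_i$ over $\R$ and isolating the compact $W_i$'s into $V_c$. Since the paper cites these sources, I would treat the orbit-closure input for the \emph{linear} action as the black box and spend the real work on the reduction via Pontryagin duality (steps (i), (iii), (iv)), which is where the torsion-coset conclusion $x\in a+\pi(V_c)$ is produced and where one must be careful that ``$\Z$-valued pairing with a small dense set of vectors'' forces rationality of the relevant coordinates of $x$, hence that $a$ can be taken torsion. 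A secondary technical point is handling the passage between $\Gamma$ and $\Gamma_0=\Gamma\cap G(\R)^+$: since $[\Gamma:\Gamma_0]<\infty$, orbit-density for $\Gamma$ follows from orbit-density for $\Gamma_0$ on each of finitely many pieces, and the non-dense case survives intersection, so no information is lost.
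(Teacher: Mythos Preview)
The paper does not prove this theorem. Theorem~4.1 is quoted from \cite{Muchnik2005} and \cite{Guivarch2004} and is used as a black box; the surrounding remarks only record a few easy consequences (uniqueness of the torsion point $a$, the corollary for closed connected invariant sets, and the observation that in the paper's application one already has a real analytic curve inside the invariant set). So there is no ``paper's own proof'' to compare your proposal against.

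As to your sketch itself, it does not constitute an independent proof. Your step~(i) asserts a Pontryagin-duality criterion of the form ``$\overline{\Gamma\cdot x}\neq M$ if and only if some nonzero $\chi_0\in\Z^m$ is constant (mod $\Z$) on $\Gamma\cdot x$''; the forward implication is false in general for group actions on tori --- a non-dense orbit closure need not lie in a single coset of a codimension-one subtorus. Knowing that it \emph{does} in the present setting is essentially the output of the theorem, not an a priori reduction. You then acknowledge in step~(ii) that the real dynamical input (dense orbits of lattice vectors in the noncompact factor, via Ratner-type arguments or Muchnik's random-walk method) ``is genuinely the content of the theorems of Muchnik and of Guivarc'h--Starkov'', and propose to treat it as a black box. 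But that input, applied to the action on the torus rather than on $V^*$, \emph{is} Theorem~4.1; invoking it for the linear action and then transferring to $M$ via your step~(i) is circular once step~(i) is seen to be invalid. The only parts of your outline that stand on their own are the exclusivity check in~(iv) and the reduction from $\Gamma$ to $\Gamma_0$, both of which are straightforward and are not where the difficulty lies.
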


In the second assertion, the torsion point $a$ is uniquely determined by $x$, because otherwise  
 $V_c$ would contain a non-zero rational vector and the representation $V$ would not be irreducible over $\Q$. 
As a corollary, if $F\subset M$ is a closed, proper, connected and $\Gamma$-invariant subset, then  $F$
is contained in a translate of $\pi(V_c)$ by a (unique) torsion point. Also, if $x$ is a point of $M$ with a finite
orbit under the action of $\Gamma$, then $x$ is a torsion point.  
\begin{rem}
Theorem~\ref{thm:Muchnik} will be used to describe $\Gamma$-invariant
real analytic subsets $Z\subset M$. If it is infinite, such a set contains the image of a non-constant real analytic curve. 
The existence of such a curve in $Z$ is the main difficulty in Muchnik's argument, but in our situation it
is given for free. 
\end{rem}

\begin{rem}
Assume that $m=2g$ for some $g\geq 1$ and $M$ is in fact a complex torus $\C^g/\Lambda$, with $\Lambda\simeq \Z^{2g}$. 
Suppose that $F$ is a complex analytic subset of $M$. The inclusion $F\to M$ factors through
the Albanese torus $F\to A_F$ of $F$, via a morphism $A_F\to M$, and the image of $A_F$ is the quotient of a subspace $W$ in 
$\C^{g}$ by a lattice $W\cap \Lambda$. So, if $F\subset a+\pi(V_c)$, the subspace $V_c$ 
contains a subspace $W\subset \R^m$ which is defined over $\Q$, contradicting the irreducibility assumption. 
To separate clearly the arguments of complex geometry from the arguments of dynamical systems, we shall
not use this type of idea  before Section~\ref{par:Complex analytic invariant subsets}.
\end{rem}

\begin{rem}
Theorem~2 of \cite{Guivarch2004} should assume that the group $G$ has no compact factor (this is implicitely assumed
in \cite[Proposition~1.3]{Guivarch2004}). 
\end{rem}
  
\subsection{Invariant real analytic subsets} 
Let $F$ be an analytic subset of $M$. 
We say that $F$ does not {\bf{fully generate}} $M$ if there is a
proper subspace $W$ of $V$ and  a non-empty open subset $\mathcal U$ of $F$ such that  $T_xF\subset W$
for every regular point $x$ of $F$ in $\mathcal U$. Otherwise, we say that $F$ fully generates $M$.


\begin{pro}\label{pro:inv-analytic}
Let $\Gamma$ be a subgroup of $\GL_m(\Z)$. Assume that the neutral
component ${\mathrm{Zar}}(\Gamma)^{irr}\subset\GL_m(\R)$ is semi-simple, 
and has no trivial factor. Let $F$ be a real analytic and $\Gamma$-invariant subset of $M$. If $F$  
fully generates $M$, it is equal to $M$. 
\end{pro}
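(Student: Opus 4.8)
\textbf{Proof plan for Proposition \ref{pro:inv-analytic}.}
The plan is to reduce the general semi-simple case to the $\Q$-irreducible case handled by Theorem~\ref{thm:Muchnik} (in the form of its corollary on closed connected $\Gamma$-invariant subsets), and then to exploit the ``fully generates'' hypothesis to rule out alternative (2). First I would replace $F$ by an irreducible component of maximal dimension (using that $\Gamma$ permutes the finitely many top-dimensional components, and passing to the finite-index subgroup $\Gamma_1$ fixing one of them; note $\mathrm{Zar}(\Gamma_1)^{irr}=\mathrm{Zar}(\Gamma)^{irr}=G$, so the hypotheses are preserved). Thus I may assume $F$ is irreducible. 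If $F$ is finite it is a single point with finite $\Gamma$-orbit, hence a torsion point by the corollary to Theorem~\ref{thm:Muchnik}; but a point cannot fully generate $M$ when $m\ge 1$, so $F$ is infinite.

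Next I would run an induction on $m$ using the $\Q$-rational structure on $V=\R^m$. Since $G$ is semi-simple with no trivial factor, $V$ decomposes over $\Q$ into irreducible $G$-subrepresentations $V=V^{(1)}\oplus\cdots\oplus V^{(r)}$, none of them the trivial representation. Each $V^{(i)}$ carries a lattice $V^{(i)}\cap\Z^m$ (after passing to a commensurable lattice, harmless since it only changes $M$ by an isogeny and $F$'s invariance and full-generation pass through isogenies), giving a subtorus $M_i=V^{(i)}/(V^{(i)}\cap\Z^m)$ and an isogeny $M_1\times\cdots\times M_r\to M$. Pulling $F$ back, I am reduced to the case where $V$ itself is $\Q$-irreducible, \emph{or} $r\ge 2$, in which case I would argue by projecting: let $p_i\colon M\to M_i$ be the projections. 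Each $\overline{p_i(F)}$ is a closed connected $\Gamma$-invariant subset of $M_i$. If some $\overline{p_i(F)}\ne M_i$, then by Theorem~\ref{thm:Muchnik} it lies in $a_i+\pi(V_{c,i})$ for a torsion point $a_i$ and the compact factor subspace $V_{c,i}\subsetneq V^{(i)}$; this forces $T_xF$ to project into the proper subspace $V_{c,i}\oplus\bigoplus_{j\ne i}V^{(j)}$ for all regular $x$, contradicting full generation. Hence every $\overline{p_i(F)}=M_i$.

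The $\Q$-irreducible case is the heart of the matter. Here I apply Theorem~\ref{thm:Muchnik} directly: either some orbit $\overline{\Gamma x}=M$ — in which case $M\subseteq \overline F=F$ (as $F$ is closed, $\Gamma$-invariant, containing $x$) and we are done — or \emph{every} $x\in F$ satisfies $x\in a(x)+\pi(V_c)$. In the latter situation, since $F$ is connected and the torsion point $a(x)$ is locally constant in $x$ (uniqueness in Theorem~\ref{thm:Muchnik}), $a(x)\equiv a$ is constant, so $F\subseteq a+\pi(V_c)$. But then $T_xF\subseteq V_c$ at every regular point, and $V_c\subsetneq V$ is a proper subspace, contradicting the assumption that $F$ fully generates $M$. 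Hence the first alternative must hold and $F=M$.

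I expect the main obstacle to be the bookkeeping in the reduction steps: checking that passing to a finite-index subgroup, to a top-dimensional component, and across isogenies all preserve the three relevant properties (semi-simplicity and no trivial factor of $G$, $\Gamma$-invariance of $F$, and that $F$ fully generates), and in particular verifying carefully that ``$F$ fully generates $M$'' is detected by the projections $p_i$ — i.e. that if every $T_xF$ lies in a $\Gamma$-invariant proper subspace then so does a witness after projection, and conversely. The dynamical input (Theorem~\ref{thm:Muchnik} and its corollary) does all the real work once the problem is in $\Q$-irreducible form; the only genuinely non-formal point is the local constancy of the torsion point $a(x)$, which follows from its uniqueness together with the fact that $a+\pi(V_c)$ is a closed submanifold, so two such translates either coincide or are disjoint.
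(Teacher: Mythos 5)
Your reduction to irreducible components and the treatment of the $\Q$-irreducible case follow the paper's opening lemma, but the argument has a serious gap precisely where the paper does its hardest work. After establishing that each projection $\overline{p_i(F)}$ equals $M_i$, you implicitly treat the case $r\ge 2$ as finished and declare ``the $\Q$-irreducible case is the heart of the matter.'' It is not: surjectivity of all coordinate projections, even combined with full generation, does not imply $F=M$. (The graph of a nontrivial analytic map $M_1\to M_2$ surjects onto both factors and can generate $V$, yet is a proper subset.) You must still argue that a proper $F$ cannot fully generate, and this is exactly where the interaction between the factors matters. The paper handles it by an induction on the number $s$ of $\Q$-irreducible factors: writing $M=M_1\times M_2$, one applies Muchnik fiberwise to show each fiber $F_x$ lies in a finite union of translates of $\pi(V_{2,c})$; one then extracts a canonical analytic ``graph of centers'' $\mathcal G$ by choosing a $G(\R)$-invariant Euclidean metric on the compact factor $V_{2,c}$, taking smallest enclosing balls, and throwing away a subanalytic bad locus. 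Density of torsion points in $\mathcal G$, Zariski density of the stabilizer $\Gamma'_x$ in $G$, and the interlacing identity $\rho_2(g)\varphi_z=\varphi_z\rho_1(g)$ then force the tangent space of $\mathcal G$ to take only finitely many values, so $\mathcal G$ lies in finitely many translates of $\dim M_1$-dimensional rational subtori. From this, if $F\ne M$, then $F$ lies locally in $a+\pi(W+V_{2,c})$ with $W+V_{2,c}$ a proper subspace, contradicting full generation. None of this is in your proposal, and it is not formal bookkeeping.

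A secondary issue: your justification of the local constancy of the torsion point $a(x)$ in the $\Q$-irreducible case invokes ``$a+\pi(V_c)$ is a closed submanifold, so two such translates either coincide or are disjoint.'' In general $\pi(V_c)$ is not closed in $M$ (the noncompact factors intertwine with the lattice; think of an irrational line in a $2$-torus), so distinct cosets $a+\pi(V_c)$ can be mutually dense and local constancy is not automatic from topological separation. The paper's corollary to Theorem~\ref{thm:Muchnik} asserts a single $a$ for a connected closed invariant $F$, but your stated reason for it does not hold; in any case the $\Q$-irreducible case is the easy part, and the missing $r\ge 2$ argument is what needs to be supplied.
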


To prove this result, we decompose the linear representation of $G={\mathrm{Zar}}(\Gamma)^{irr}$ on $V$ into 
a direct sum of irreducible representations over $\Q$:
\begin{equation}
V=V_1\oplus \cdots \oplus V_s.
\end{equation}
 Since there is no trivial factor, non of the $V_i$ is the trivial representation. For each index $i$, we denote by $V_{i,c}$ the compact factor of $V_i$.
The projection $\pi$ is a diffeomorphism from $V_{i,c}$ onto its image in $M_i$, 
because otherwise $V_{i,c}$ would contain a non-zero vector in $\Z^m$ and $V_i$ would not be an irreducible representation over $\Q$. 
Set 
\begin{equation}
M_i=V_i/(\Z^m\cap V_i).
\end{equation}
Then, each $M_i$ is a compact torus of dimension $\dim(V_i)$, and $M$ is isogenous to the product of the $M_i$. 
We may, and we shall assume that $M$ is in fact equal to this product:
\begin{equation}
M=M_1\times \cdots \times M_s;
\end{equation}
this assumption simplifies the exposition without any loss of generality, because the image and the pre-image of a
real analytic set by an isogeny is analytic too.
We also assume, with no loss of generality, that $\Gamma$ is contained in $G$.
For every index $1\leq i\leq s$, we denote by $\pi_i$ the projection on the $i$-th factor $M_i$.

\begin{lem}
If $F$ fully generates $M$, the projection $F_i:=\pi_i(F)$ is equal to $M_i$ for every $1\leq i\leq s$.
\end{lem}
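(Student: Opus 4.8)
The plan is to reduce the statement about the compact torus $M$ to a statement about a single factor $M_i$ and then invoke Theorem~\ref{thm:Muchnik} applied to the restricted action of a suitable Zariski-dense subgroup. First I would observe that, since $M=M_1\times\cdots\times M_s$ and $\Gamma\subset G=G_1\times\cdots\times G_s$ acts factor-wise (up to passing to $\mathrm{Zar}(\Gamma)^{irr}$, which we may do), each $G_i$ is semi-simple and its representation on $V_i$ is irreducible over $\Q$; moreover $V_i$ is not the trivial representation, so $G_i$ is non-trivial and hence not compact on the full space $V_i$, i.e. $V_{i,c}$ is a \emph{proper} subspace of $V_i$. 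The projection $F_i=\pi_i(F)\subseteq M_i$ is a real analytic subset (image of a real analytic set under the isogeny-compatible projection is analytic — more precisely, one should note $F_i$ is analytically constructible and take its closure, which is $\Gamma$-invariant and analytic), and it is invariant under the projected group $\Gamma_i:=\pi_i(\Gamma)$, whose Zariski closure has neutral component $G_i$.

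Next I would use the full-generation hypothesis to produce a point of $F$ whose $i$-th coordinate escapes the ``compact'' locus. Concretely: if $F_i$ were not all of $M_i$, then by Theorem~\ref{thm:Muchnik} every point $y\in F_i$ lies in a translate $a+\pi_i(V_{i,c})$ by a torsion point $a$ of $M_i$ (because a proper $\Gamma_i$-invariant analytic subset cannot meet any dense orbit, so all its points fall into case (2) of the theorem). In particular the tangent space to $F_i$ at any regular point is contained in the subspace $\pi_i(V_{i,c})$ pulled back to $V_i$. Combined over a small open piece $\mathcal U$ of $F$ (chosen so that $\pi_i|_{\mathcal U}$ has locally constant rank), this forces $T_xF$ to lie in the proper subspace $W:=V_1\oplus\cdots\oplus V_{i-1}\oplus V_{i,c}\oplus V_{i+1}\oplus\cdots\oplus V_s$ of $V$ for all regular $x$ in that piece — contradicting the assumption that $F$ fully generates $M$. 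Hence $F_i=M_i$.

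The one genuinely delicate point — and where I expect the main obstacle — is the interface between ``$F$ real analytic in $M$'' and ``$F_i$ real analytic in $M_i$'': the image of a real analytic set under a proper real analytic map is only subanalytic, not analytic, in general. I would handle this by replacing $F_i$ by the Zariski closure (in the sense of real analytic sets, i.e.\ the smallest closed analytic subset containing it) of $\pi_i(F)$; this closure is still $\Gamma_i$-invariant, still a proper subset of $M_i$ if $F_i\neq M_i$ (its regular points are still controlled by case (2) of Theorem~\ref{thm:Muchnik} once one checks it cannot be the whole torus), and the tangent-space inclusion $T_xF\subset W$ near a regular point of $F$ over which $\pi_i$ has constant rank is what is actually needed. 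Alternatively, one can argue directly: pick a point $p\in F$ at which $F$ is smooth and $\pi_i|_F$ has maximal rank $r_i$; a neighbourhood of $\pi_i(p)$ in $\pi_i(F)$ is then an $r_i$-dimensional analytic submanifold germ of $M_i$, and its $\Gamma_i$-orbit closure, being $\Gamma_i$-invariant and containing a non-constant analytic arc, is either all of $M_i$ or contained in a torsion translate of $\pi_i(V_{i,c})$ by Theorem~\ref{thm:Muchnik}. If the latter held for every choice of $p$ and every $i$, then $T_pF\subset W$ everywhere, contradicting full generation. I would write the proof in this second form to avoid invoking subanalytic geometry, making the argument self-contained modulo Theorem~\ref{thm:Muchnik}.
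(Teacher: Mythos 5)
Your proposal is correct and follows essentially the same route as the paper: project to each factor, apply Theorem~\ref{thm:Muchnik} to the $\Gamma$-action on $M_i$, and observe that if $F_i$ were proper then near a regular point of $F$ the tangent spaces would be confined to $V_1\oplus\cdots\oplus V_{i,c}\oplus\cdots\oplus V_s$, contradicting full generation. The one place you overcomplicate is the worry about subanalyticity: the paper sidesteps it entirely by noting that $F_i=\pi_i(F)$ is automatically \emph{closed} (continuous image of a compact set) and $\Gamma$-invariant, which is all that the corollary to Theorem~\ref{thm:Muchnik} requires — it never needs $F_i$ to be analytic. (A small imprecision in your alternative argument: it is the connected manifold germ, not its whole $\Gamma_i$-orbit closure, that is contained in a single torsion translate of $\pi(V_{i,c})$; and the contradiction with full generation already arises from a single index $i$ with $F_i\neq M_i$, not from ``every $p$ and every $i$''.)
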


\begin{proof} By construction, $F_i$ is a closed, $\Gamma$-invariant subset of $M_i$. 
Fix a connected component $F_i^0$ of $F_i$. If it were contained in a translate of $\pi(V_{i,c})$, then 
$F$ would not fully generate $M$. Thus, Theorem~\ref{thm:Muchnik} implies  $F_i^0=M_i$. 
\end{proof} 

We do an induction on the number $s$ of irreducible factors. For just one factor, 
this is the previous lemma. 
Assuming that the proposition has been proven for $s-1$ irreducible factors, we now want to prove it for $s$ factors. 
To simplify the exposition, we suppose that $s=2$, which means that $M$ is the product of just two factors $M_1\times M_2$. 
The proof will only use that $\pi_1(f)= M_1$ and $F$ fully generates $M$; thus, 
changing $M_1$ into $M_1\times \ldots \times M_{s-1}$, this proof also establishes the induction in full generality. 

There is a closed 
subanalytic subset $Z_1$ of $M_1$ with empty interior 
such that $\pi_1$ restricts to a locally trivial analytic fibration from $F\setminus \pi_1^{-1}(Z_1)$ to $M_1\setminus Z_1$.  
If $F$ does not coincide with $M$, the fiber $F_{x}$ is a proper, non-empty analytic subset of $\{x\}\times M_2$ for 
every $x$ in $M_1\setminus Z_1$. We shall derive a contradiction 
from the fact that $F$ fully generates $M$.

Theorem~\ref{thm:Muchnik} tells us that, for every torsion point $x$ in $M_1\setminus Z_1$, there is a finite set of points $a_j(x)$ in $M_2$ such that 
\begin{equation}
F_{x}\subset \bigcup_{j=1}^J a_j(x)+\pi(V_{2,c});
\end{equation}
the number of such points $a_j(x)$ is bounded from above by the number of connected components of $F_x$.
Since torsion points are dense in $M_1$, this property holds for every point $x$ in $M_1\setminus Z_1$ (the $a_j(x)$
are not torsion points a priori).
Since there are points with a dense $\Gamma$-orbit in $M_1$, we can assume that the number $J$ of points $a_j(x)$ does not
depend on $x$. 

Assume temporarily that $J=1$, so that $F_x$ is contained in $a(x)+\pi(V_{2,c})$ for some point $a(x)$ of $M_2$. 
The point $a(x)$ is not uniquely defined by this property (one can replace it by $a(x)+\pi(v)$ for any $v\in V_{2,c}$), 
but there is a way to choose $a(x)$ canonically. 
First, the action of $G(\R)$ on $V_{2,c}$ factors through a compact subgroup of $\GL(V_{2,c})$, 
so we can fix a $G(\R)$-invariant euclidean metric  $\dist_2$ on $V_{2,c}$. 
Then, any compact subset $K$ of $V_{2,c}$ is contained in a unique ball of smallest radius 
for the metric $\dist_2$; we denote by $c(K)$ and $r(K)$ the center and radius of this ball. 
Since the projection $\pi$ is a diffeomorphism from $V_{2,c}$ onto its image in $M_2$,
 the center of $F_{x}$ inside the translate of $\pi(V_{2,c})$ containing $F_{x}$ is a well defined point 
\begin{equation}
c(x):=c(F_{x})
\end{equation}
of $M_2$ such that $F_{x}$ is contained in $c(x)+\pi(V_{2,c})$. When $J>1$, this procedure gives a finite set of   centers $\{c_j(x)\}_{1\leq j\leq J}$.

The centers $c_j(x)$ and the radii $r_j(x)$   are (restricted) sub-analytic functions of $x$. 
Thus, there is a proper, closed analytic subset $D_1$ of $M_1$, containing $Z_1$,
 such that all $r_j(x)$ and $c_j(x)$ are smooth and analytic on its complement (see~\cite{Bierstone-Milman:IHES, Coste, vandenDries:Book}). 
Let $\mathcal{G}$ be the subset of $\pi_1^{-1}(M_1\setminus D_1)$ given by the union of the graphs of the centers: $\mathcal{G}=\{(x,y)\in M_1\times M_2; \; 
x\in M_1\setminus D_1, \; y=c_j(x) \; {\text{for some}}\;  j\}$.

\begin{lem}
The set $\mathcal{G}$ is contained in finitely many translates of  subtori of $M_1\times M_2$, each of dimension $\dim M_1 $. 
\end{lem}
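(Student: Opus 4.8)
The claim is that the graph set $\mathcal{G}$ of the canonical centers $c_j(x)$ lies in finitely many translates of subtori of dimension $\dim M_1$. The idea is that each $c_j$, on a suitable Zariski-open piece of $M_1$, extends to a \emph{homomorphic} (up to translation) section, forced by the $\Gamma$-equivariance of the whole construction. First I would exploit that the center-choosing procedure is canonical: since $\dist_2$ is $G(\R)$-invariant and $\Gamma$ acts on $M=M_1\times M_2$ preserving $F$ and the product decomposition, for $\gamma\in\Gamma_0$ we have $\gamma\cdot F_x = F_{\gamma x}$, and the unique smallest enclosing ball transforms equivariantly; hence the (multi-valued) assignment $x\mapsto\{c_j(x)\}$ satisfies $\{c_j(\gamma x)\} = \gamma\cdot\{c_j(x)\}$ as sets, where $\gamma$ acts on $M_2$ through its second-coordinate action. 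So $\mathcal{G}$ is a $\Gamma_0$-invariant subanalytic subset of $M_1\times M_2$ that projects \emph{onto} $M_1\setminus D_1$ with finite fibers.

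Next I would pass to the closure $\overline{\mathcal{G}}$ in $M_1\times M_2$, which is a closed $\Gamma_0$-invariant subanalytic (indeed real analytic on a dense open set) subset, and analyze it fiberwise over $M_2$ rather than over $M_1$: consider $\pi_2(\overline{\mathcal{G}})\subset M_2$. More usefully, I would apply the structure theory already developed — Theorem~\ref{thm:Muchnik} and Proposition~\ref{pro:inv-analytic} — to $\overline{\mathcal{G}}$ viewed as an invariant analytic subset. Since $\pi_1(\overline{\mathcal{G}})=M_1$ with finite generic fibers, $\dim\overline{\mathcal{G}}=\dim M_1$, so each irreducible component has dimension exactly $\dim M_1$ and maps finitely onto $M_1$. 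The key point is then to show each component is a translate of a subtorus: a connected component $\mathcal{G}^0$ of $\overline{\mathcal{G}}$ (after passing to a finite-index subgroup $\Gamma_1\subset\Gamma_0$ stabilizing it) is a connected, closed, $\Gamma_1$-invariant analytic subset; applying the corollary to Theorem~\ref{thm:Muchnik} to the second-coordinate projection $\pi_2(\mathcal{G}^0)$ and more precisely to $\mathcal{G}^0$ itself inside $M_1\times M_2$ — whose $\Gamma_1$-action still has semisimple Zariski closure with no trivial factor on the relevant span — forces $\mathcal{G}^0$ to be contained in a torsion translate of $\pi(\text{(compact factor)})$; but then combining with the fact that $\mathcal{G}^0\to M_1$ is finite and surjective and $M_1$ has no compact factor in its $\Gamma$-action, one deduces $\mathcal{G}^0$ is actually a translate of a subtorus projecting isogenously onto $M_1$. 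Finitely many components give the finitely many translates.

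Alternatively, and perhaps more cleanly, I would argue that the functions $c_j$ are not merely subanalytic but, being built equivariantly from a linear-algebraic (barycentric) operation on $F_x$, satisfy a quasi-additivity relation under the $\Gamma_1$-action that, combined with analyticity and $\Gamma_1$ having Zariski-dense image in a semisimple group with no trivial factor, forces $c_j(x) - c_j(x_0)$ to depend $\Q$-linearly (equivalently: $x\mapsto c_j(x)$ is affine, i.e. a translate of a homomorphism $M_1\to M_2$) on a neighborhood and hence globally after shrinking; the graph of a translated homomorphism $M_1\to M_2$ is exactly a translate of a $\dim M_1$-dimensional subtorus, and there are finitely many branches $j$.

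\textbf{Main obstacle.} The delicate step is upgrading ``$c_j$ is subanalytic and $\Gamma_1$-equivariant'' to ``$c_j$ is a translated homomorphism.'' Equivariance alone under a single element gives little; one must use Zariski-density of $\Gamma_1$ in the semisimple group $G$ together with the no-trivial-factor hypothesis to rule out the center drifting inside the compact factor $\pi(V_{2,c})$ in a non-linear way. I expect this to require either a careful application of Theorem~\ref{thm:Muchnik} to the graph $\overline{\mathcal{G}}$ itself (checking that the induced $\Gamma_1$-representation on the relevant subquotient of $V_1\oplus V_2$ is still semisimple with no trivial factor — which holds because $V_1,\dots,V_s$ are nontrivial irreducibles), or a direct argument that an analytic $\Gamma_1$-equivariant finite multisection over $M_1$ with Zariski-dense semisimple monodromy must be algebraic of the stated form. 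Everything else — the equivariance of the smallest-enclosing-ball construction, the subanalyticity of $c_j$ and $r_j$ off $D_1$, and bounding the number of branches by the number of components of $F_x$ — is routine given the results already in the paper.
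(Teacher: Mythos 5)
Your plan correctly identifies that the heart of the matter is upgrading ``subanalytic and $\Gamma$-equivariant'' to ``translated homomorphism,'' but you do not close this step, and the two routes you sketch for it both run into concrete obstructions. First, invoking Proposition~\ref{pro:inv-analytic} is circular: this lemma is precisely the final ingredient in the paper's proof of that proposition, so it is not yet available. Second, Theorem~\ref{thm:Muchnik} (and the corollary you call ``the corollary to Theorem~\ref{thm:Muchnik}'') is stated under the hypothesis that the $G$-representation on $\Q^m$ is $\Q$-irreducible; the representation on $V=V_1\oplus V_2$ is semisimple with no trivial factor but is manifestly \emph{not} irreducible, so neither the theorem nor its corollary can be applied to $\mathcal{G}^0$ sitting inside $M_1\times M_2$. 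Merely noting that the representation is ``semisimple with no trivial factor on the relevant span'' does not satisfy the irreducibility hypothesis, and the whole induction on the number of $\Q$-irreducible factors exists precisely because Muchnik's theorem does not handle the reducible case. Third, the asserted ``quasi-additivity'' of $x\mapsto c_j(x)$ is never stated or justified: the smallest-enclosing-ball center is a highly nonlinear operation, and $\Gamma$-equivariance of $\mathcal{G}$ by itself gives no additivity.

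What the paper actually does is a tangent-space argument that sidesteps all of this. Since for a torsion point $x\in M_1\setminus D_1$ a finite-index subgroup $\Gamma'_x$ fixes every point $z=(x,y)\in\mathcal{G}_x$, and $\Gamma'_x$ is still Zariski-dense in $G$, the linear map $\varphi_z\colon V_1\to V_2$ whose graph is $T_z\mathcal{G}$ must interlace $\rho_1$ and $\rho_2$ on all of $G$. Torsion points are dense in $\mathcal{G}$, so by continuity $\varphi_z$ is a $G$-morphism at \emph{every} $z$. Combining this with the equivariance relation $\varphi_{g(z)}\circ\rho_1(g)=\rho_2(g)\circ\varphi_z$ forces $\varphi_{g(z)}=\varphi_z$, i.e.\ $T_z\mathcal{G}$ is constant along $\Gamma$-orbits; choosing $z$ with dense $\Gamma$-orbit in $M_1$ shows there are only finitely many possible tangent spaces $W_j$. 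Analytic continuation then shows $\mathcal{G}$ locally agrees with (hence contains a full open piece of) $z+\pi(W_j)$, which forces $W_j$ to be a rational subspace, so $\pi(W_j)$ is a genuine subtorus of dimension $\dim M_1$. None of this mechanism — the use of stabilizers of torsion points being Zariski-dense, the intertwiner structure on tangent spaces, or the analytic continuation to get rationality — appears in your proposal, so the key step remains a gap rather than a different proof.
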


This lemma concludes the proof of Proposition~\ref{pro:inv-analytic}, because if $\mathcal{G}$ is locally contained in $a+\pi(W)$ for some proper
subset $W$ of $V$ of dimension $\dim M_1$, then $F$ is locally contained in $a+\pi(W+V_{2,c})$, and $F$ does not fully generate $M$ because
$\dim(W+V_{2,c})<\dim V$. 

\begin{proof}
By construction, $\mathcal{G}$  is a smooth analytic subset of $\pi_1^{-1}(M_1\setminus D_1)$ and it is invariant by $\Gamma$.
For $x$ in $M_1\setminus D_1$, we denote by $\mathcal{G}_x$ 
the finite fiber $\pi_1^{-1}(x)\cap \mathcal{G}$.

For every torsion point $x\in M_1\setminus D_1$, the stabilizer $\Gamma_{x}$ of $x$  
is a finite index subgroup of $\Gamma$ that preserves the finite set $\mathcal{G}_{x}$. Hence, $\mathcal{G}_{x}$ 
is a finite set of torsion points of $M$, and a finite index subgroup $\Gamma'_{x}$ of $\Gamma_x$  
fixes individually each of the points $z\in \mathcal{G}_{x}$. In particular, torsion points are dense in $\mathcal{G}$. 
Fix one of these torsion points $z=(x,y)$ with $x$ in $M_1\setminus D_1$, and consider the tangent subspace $T_z\mathcal{G}$. 
It is the graph of a linear morphism $\varphi_z\colon T_{x}M_1\to T_{y}M_2$. Identifying the tangent spaces $T_{x}M_1$ and $T_{y}M_2$
with $V_1$ and $V_2$ respectively, $\varphi_z$ becomes a morphism that interlaces the representations 
$\rho_1$ and $\rho_2$ of $\Gamma'_{x}$ on $V_1$ and $V_2$; since $\Gamma'_{x}$ is Zariski dense in 
$G$, we get
\begin{equation}\label{eqcon}
\rho_2(g)\circ \varphi_z=\varphi_z\circ \rho_1(g)
\end{equation}
for every $g$ in $G$. In other words, $\varphi_z\in \End(V_1;V_2)$ is a morphism of $G$-spaces.  
This holds for every torsion point $z$ of $\mathcal{G}$; by continuity of tangent spaces
and density of torsion points, this holds everywhere on $\mathcal{G}$. 

Since $\mathcal{G}$ is $\Gamma$-invariant, we also have
\begin{equation}
\varphi_{g(z)}\circ \rho_1(g)= \rho_2(g)\circ\varphi_{z}
\end{equation}
for all $g\in \Gamma$ and $z\in \mathcal{G}$.
Then equation \eqref{eqcon} shows that $\varphi_{g(z)}=\varphi_{z}$, which means that 
 the tangent space $T_z\mathcal{G}$ is constant along the orbits of $\Gamma$.
Taking a point $z$ in $\mathcal{G}$ whose first projection has a dense $\Gamma$-orbit in $M_1$, we see that
the tangent space $w\in \mathcal{G}\mapsto T_w\mathcal{G}$ takes only finitely many values, at most $\vert \mathcal{G}_{\pi_1(z)}\vert$. 

Let $(W_j)_{1\leq j\leq k}$ be the list of possible tangent spaces $T_z\mathcal{G}$. Locally, near any point $z\in \mathcal{G}$, $\mathcal{G}$  coincides with $z+\pi(W_j)$ for some $j$. By analytic continuation $\mathcal{G}$ contains the intersection of $z+\pi(W_j)$ with $\pi_1^{-1}(M_1\setminus D_1)$; thus, $W_j$ is a rational subspace of $V$ and $\pi(W_j)$ is a  subtorus of $M$. Then $\mathcal{G}$ is  contained in a finite union of translates of the tori $\pi(W_j)$.
\end{proof}

\subsection{Complex analytic invariant subsets}\label{par:Complex analytic invariant subsets}

Let $\J$ be a complex structure on $V=\R^m$, so that $M$ is now endowed with 
a structure of complex torus. Then, $m=2g$ for some integer $g$, $\R^m$ can be identified to $\C^g$, 
and $M=\C^g/\Lambda$ where $\Lambda$ is the lattice $\Z^m$; to simplify the exposition, we denote by $A$ the complex
torus $\C^g/\Lambda$ and by $M$ the real torus $\R^m/\Z^m$. Thus, $A$ is just $M$, together with the complex 
structure $\J$.
Let $X$ be an irreducible complex analytic subset of $A$, and let $X^{reg}$ be its smooth locus.

\begin{lem}\label{lemminimaltorus} Let $W$ be the real subspace of $V$ generated by  the tangent 
spaces $T_xX$, for $x\in X^{reg}$. Then $W$ is both complex and rational, and $X$ is contained in 
a translate of the complex torus $\pi(W)$. 
\end{lem}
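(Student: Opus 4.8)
The plan is to reduce Lemma~\ref{lemminimaltorus} to the statement that the span of tangent spaces of an irreducible complex analytic subset of a complex torus is a complex rational subspace, which in turn follows by combining a monodromy/Abelian-variety argument with the definition of the Albanese torus. First I would observe that $W$ is a complex subspace: it is generated by the spaces $T_xX$, each of which is $\J$-invariant because $X$ is a complex analytic subset, so any real span of such subspaces is again $\J$-invariant. So the only real content is that $W$ is defined over $\Q$ (equivalently, $\pi(W)$ is a subtorus) and that $X$ lies in a single translate of $\pi(W)$.

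For the rationality and the containment I would factor the inclusion $X\hookrightarrow A$ through the Albanese variety of (a desingularization of) $X$. Fix a smooth point $x_0\in X^{reg}$; after translating we may assume $0\in X$ and $x_0$ is as convenient. Passing to a resolution $\tilde X\to X$, the composite $\tilde X\to A$ induces a morphism of complex tori $\mathrm{Alb}(\tilde X)\to A$ (up to the choice of base point), since $A$ is itself a complex torus and there are no nonconstant maps from a rational-type target, by the universal property of the Albanese. Let $W'\subset\C^g$ be the tangent space at $0$ of the image subtorus $\pi(W')=\mathrm{image}(\mathrm{Alb}(\tilde X)\to A)$; this is automatically a complex \emph{rational} subspace, being the Lie algebra of a subtorus of $A$, and $X$ is contained in a translate of $\pi(W')$ because the image of $\tilde X$ in $A$ factors through the Albanese image. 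It then remains to identify $W'$ with the span $W$ of the tangent spaces $T_xX$. The inclusion $W\subseteq W'$ is clear since $X$ (hence all its tangent directions at smooth points, after translating the relevant point to lie over the base point) lies in a translate of $\pi(W')$. For the reverse inclusion I would use that $\mathrm{Alb}(\tilde X)$ is generated as a group by the image of (differences of points of) $\tilde X$, so its Lie algebra $W'$ is generated by the images of the tangent spaces of $\tilde X$, i.e.\ by the spaces $T_xX$ at smooth points; hence $W'\subseteq W$.

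The main obstacle I expect is the bookkeeping around base points and translates: the Albanese map depends on a choice of base point and is only canonical up to translation, while the statement we want is intrinsically about the real span $W$ of \emph{all} tangent spaces $T_xX$, which is translation-invariant by construction. The clean way to handle this is to do everything after a single translation sending one chosen smooth point to $0$, prove $X\subset\pi(W')$ there, and note that both $W$ and the property ``$X$ lies in a translate of $\pi(W)$'' are unchanged by translation, so the conclusion in the original coordinates follows. A secondary, purely formal point is that $X$ need only be assumed irreducible as an analytic set, so one should pass to a resolution or normalization of $X$ before invoking the Albanese; this is standard and I would just cite it. Everything else — that a subtorus has rational Lie algebra, that $\J$-invariance of the summands forces $\J$-invariance of $W$, and that the Albanese is generated by the image of the variety — is routine.
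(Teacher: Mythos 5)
Your proof is essentially correct, but it takes a genuinely different route from the paper. You go through the Albanese torus of a desingularization of $X$, invoking its universal property to produce the subtorus and its rationality. The paper instead proves the lemma by a self-contained elementary device: after translating so that $0\in X$, it iterates the difference construction $Y^{(\ell+1)}=Y^{(\ell)}-Y^{(\ell)}$, observes that these form an increasing chain of irreducible analytic subsets of the compact torus $A$, invokes the Noetherian property to get stabilization at some $Y^{(\ell_0)}$, and notes that the stable set is an analytic subgroup, hence a subtorus $\pi(V')$. The two inclusions $W\subseteq V'$ and $V'\subseteq W$ are then checked directly, yielding rationality of $W$.

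What each approach buys. Your Albanese argument is conceptually transparent and connects the lemma to a standard fact (a morphism from a compact K\"ahler manifold to a complex torus factors through the Albanese). But it quietly pulls in nontrivial machinery: (a) resolution of singularities for compact complex spaces; (b) the fact that a resolution of a compact analytic subset of a K\"ahler manifold (here a compact complex torus, not a priori abelian --- so one cannot simply cite the algebraic Albanese) is again K\"ahler, which is needed for the Albanese to be a compact torus with the universal property; and (c) the span of the differentials $d(\mathrm{alb})_x$ over all $x$ being the full Lie algebra of the Albanese. The paper's argument avoids all of this and uses only elementary facts about analytic subsets of a compact torus and Noetherianity, which is why it was chosen.

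One small gap to tighten if you wanted to run with your approach: you identify $W'=\mathrm{Lie}(B)$ with the span of the images $d(g\circ\mathrm{alb})_x(T_x\tilde X)$ and then say these are ``the spaces $T_xX$ at smooth points''. This is literally correct only for $x\in\tilde X$ lying over $X^{\mathrm{reg}}$. For $x$ over the singular locus the image of the differential need not obviously lie in $W$; you need an extra step --- e.g.\ note that the locus in $\tilde X$ where the image of the differential lies in $W$ is a closed analytic subset containing a dense open, hence all of $\tilde X$ by irreducibility --- before concluding $W'\subseteq W$. This is a minor point, but worth stating explicitly.
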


\proof
Since $X$ is complex, its tangent space is invariant under the complex structure:
$\J T_xX=T_xX$ for all $x\in X^{reg}$. So, the sum $W:=\sum_x T_xX$ of the $T_xX$ over all points  $x\in X^{reg}$ is invariant by $\J$
and $W$ is a complex subspace of $V\simeq \C^g$.
Observe that if $V'$ is any real subspace of $V$ such that $\pi(V')$ contains some translate of $X^{reg}$, then $W\subseteq V'$.

Let $a$ be a point 
of $X^{reg}$, and $Y$ be the translate $X-a$ of $X$. It is an irreducible complex analytic subset
of $A$ that contains the origin $0$ of $A$ and satisfies $T_yY \subset W$ for every $y\in Y^{reg}$. Thus, 
$Y^{reg}$ is contained in the projection $\pi(W)\subset A$. Set $Y^{(1)}=Y$, $Y^{(1)}_o=Y^{reg}$ and then 
\begin{equation}
Y^{(\ell+1)}=Y^{(\ell)}-Y^{(\ell)}, \quad Y^{(\ell+1)}_o=Y^{(\ell)}_o-Y^{(\ell)}_o
\end{equation}
for every integer $\ell \geq 1$. Since $Y^{(1)}$ is irreducible, and $Y^{(2)}$ is the image of $Y^{(1)}\times Y^{(1)}$
by the complex analytic map 
$(y_1,y_2)\mapsto y_1-y_2,
$ we see that $Y^{(2)}$ is an irreducible complex analytic subset of $A$. Moreover $Y^{(2)}_o$ is a connected, dense, and open subset of $Y^{(2),reg}$.
Observe that $Y^{(2)}_o$ is contained in $\pi(W)$ and   contains 
$Y^{(1)}_o$ because $0\in Y^{(1)}_o$. By a simple induction, the sets $Y^{(\ell)}$
form an increasing sequence of irreducible complex analytic subsets of $A$, and  $Y^{(\ell)}_o$ is a connected, dense and open subset of $Y^{(\ell), reg}$ that is contained in $\pi(W)$. 
By the Noether property, there is an index $\ell_0\geq 1$ such that $Y^{(\ell)}=Y^{(\ell_0)}$ for every $\ell\geq \ell_0$. This 
complex analytic set is a subgroup of $A$, hence it is a complex subtorus. Write $Y^{(\ell_0)}=\pi(V')$ for some rational subspace $V'$ of $V$. Since $Y\subset \pi(V')$, we get $W\subseteq V'$.
Since $Y^{(\ell_0)}_o\subseteq \pi(W)$, we derive $V'=T_xY^{(\ell_0)}_o\subseteq W$ for every $x\in Y^{(\ell_0)}_o.$ This implies $W=V'$, and shows that $W$ is rational.

Thus, $\pi(W)$ is a complex subtorus of $A$. Since $T_xX$ is contained in $W$ for every regular point, $X$ is locally contained in a translate of $\pi(W)$. 
Being irreducible, $X$ is connected, and it is contained in a unique translate $a+\pi(W)$. 
\endproof

\begin{lem}\label{lem:generates} Let $X$ be an irreducible complex analytic subset of $A$. The following properties are equivalent: 
\begin{itemize}
\item[(i)] $X$ is   contained in a translate of a proper complex subtorus $B\subset A$;
\item[(ii)] $X$ does not fully generate $M$;
\item[(iii)] there is a proper real subspace $V'$ of $V$ that contains $T_xX$ for every $x\in X^{reg}$.   
\end{itemize}
\end{lem}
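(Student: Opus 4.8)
The plan is to prove the three equivalences by a short cycle, leaning on Lemma~\ref{lemminimaltorus} for the nontrivial direction. Recall the notation: $W=\sum_{x\in X^{reg}}T_xX$ is the real subspace of $V$ generated by the tangent spaces of $X$ along its regular locus. The equivalence (ii)$\Leftrightarrow$(iii) is essentially a reformulation of the definition of ``fully generates''. Indeed, if (iii) holds, then taking $W:=V'$ and any non-empty open $\mathcal U\subset X^{reg}$ in the definition shows that $X$ does not fully generate $M$, so (ii) holds. Conversely, if $X$ does not fully generate $M$, there is a proper subspace $W_0\subsetneq V$ and a non-empty open $\mathcal U\subset X$ with $T_xX\subset W_0$ for all regular $x\in\mathcal U$; but $X$ is irreducible, so $X^{reg}$ is connected, and by analytic continuation of the condition $T_xX\subset W_0$ (the locus where this fails is analytic and proper) one gets $T_xX\subset W_0$ for \emph{every} $x\in X^{reg}$, which is exactly (iii) with $V':=W_0$.

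For (iii)$\Rightarrow$(i), I would apply Lemma~\ref{lemminimaltorus}: if a proper real subspace $V'$ contains all $T_xX$, then by the minimality property of $W$ recorded in that lemma (``if $V'$ is any real subspace with $\pi(V')$ containing a translate of $X^{reg}$, then $W\subseteq V'$'', equivalently $W$ is the smallest subspace whose tangent spaces dominate those of $X$) we have $W\subseteq V'\subsetneq V$, so $W$ is a proper subspace. Lemma~\ref{lemminimaltorus} further asserts that $W$ is complex and rational and that $X$ is contained in a translate $a+\pi(W)$ of the complex subtorus $\pi(W)$; since $W\subsetneq V$, the torus $B:=\pi(W)$ is a proper complex subtorus of $A$, giving (i).

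For (i)$\Rightarrow$(iii), suppose $X\subseteq a+B$ with $B=\pi(V'_{\C})$ a proper complex subtorus, where $V'_{\C}\subsetneq V$ is the corresponding proper (complex, rational) subspace. Then every tangent space $T_xX$ for $x\in X^{reg}$ is a subspace of the tangent space to the translate $a+B$, which is canonically identified with $V'_{\C}$; hence $T_xX\subset V'_{\C}$ for all $x\in X^{reg}$, and $V':=V'_{\C}$ witnesses (iii). This closes the cycle (i)$\Rightarrow$(iii)$\Rightarrow$(i) together with (ii)$\Leftrightarrow$(iii), proving the lemma.

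I do not expect any serious obstacle: the content has been front-loaded into Lemma~\ref{lemminimaltorus}, and what remains is bookkeeping. The one point requiring a little care is the analytic-continuation step in (ii)$\Rightarrow$(iii): one must invoke irreducibility of $X$ (so $X^{reg}$ is connected) and the fact that, for a fixed proper subspace $W_0$, the set $\{x\in X^{reg}: T_xX\not\subset W_0\}$ is open, so its complement, being closed and containing a non-empty open set, must be all of the connected manifold $X^{reg}$. That is the only place where genuine (if elementary) complex-analytic input beyond Lemma~\ref{lemminimaltorus} is used.
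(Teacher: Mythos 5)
Your proposal is correct and follows essentially the same route as the paper: Lemma~\ref{lemminimaltorus} carries the weight, and (ii)$\Rightarrow$(iii) is handled by analytic continuation along the connected set $X^{reg}$ (your rearrangement of the cycle of implications is immaterial). One small inaccuracy worth flagging: in your final paragraph you justify the continuation step by saying the set $\{x\in X^{reg}:T_xX\subset W_0\}$ is \emph{closed} and contains a nonempty open set, hence equals $X^{reg}$ by connectedness --- but ``closed and containing a nonempty open set'' does not by itself force equality (a closed ball in $\R^n$ is a counterexample); what you actually need, and what your earlier phrase ``analytic continuation'' implicitly invokes, is that this set is a real \emph{analytic} subset of $X^{reg}$ (e.g.\ it is the vanishing locus of the composition of the Gauss map with the projection to a complement of $W_0$), so the identity principle for real analytic sets applies.
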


\begin{proof}
Obviously (i) $\Rightarrow$  (iii) $\Rightarrow$ (ii). We now prove that (ii) implies~(i).
If $X$ does not fully generate $M$, then (iii) is satisfied on some non-empty open subset $\mathcal U$ of $X^{reg}$. 
Since $X^{reg}$ is connected and locally analytic, we deduce from analytic continuation that $T_xX\subset V'$ for every regular point of $X$. 
From Lemma~\ref{lemminimaltorus}, $X$ is contained in a complex subtorus $B=\pi(W)\subset A$ for some complex subspace $W$ of $V'$.
\end{proof}

\begin{thm}\label{thminvcomplexsub}
  Let $\Gamma$ be a subgroup of $\SL_m(\Z)$.
Assume that the neutral component for 
 the Zariski topology of the Zariski closure of $\Gamma$ in $\SL_m(\R)$
  is semi-simple
and has no trivial factor.
Let $\J$ be a complex structure on $M=\R^m/\Z^m$ and let $X$ be an irreducible complex analytic subset of the complex
torus $A=(M,\J)$. If $X$ is $\Gamma$-invariant, it is equal to a translate of a complex subtorus $B\subset A$ by a torsion point. 
\end{thm}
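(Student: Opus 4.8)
The plan is to combine the real-analytic result of Proposition~\ref{pro:inv-analytic} with the structure results for complex analytic subsets of tori from Lemmas~\ref{lemminimaltorus} and~\ref{lem:generates}, proceeding by induction on $\dim A$. First I would dispose of the trivial case $X=A$. So assume $X\neq A$; then by Lemma~\ref{lem:generates}, $X$ does not fully generate $M$, so there is a proper complex subtorus $B=\pi(W)\subset A$ (with $W$ a rational complex subspace of $V$) and a translate $a+B$ containing $X$; by Lemma~\ref{lemminimaltorus} we may take $W$ minimal, namely the real span of the tangent spaces $T_xX$, $x\in X^{reg}$. The point is now to understand the orbit of this translate $a+B$ under $\Gamma$.

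The key observation is that $\Gamma$-invariance of $X$ forces the minimal subspace $W$ to be $\Gamma$-invariant: for $\gamma\in\Gamma$, $\gamma X$ is again irreducible complex analytic with $\gamma X\neq A$ and its minimal subspace is $\gamma W$; but $\gamma X=X$, so $\gamma W=W$. Thus $W$ is a $\Gamma$-invariant rational subspace of $V$, hence $G={\mathrm{Zar}}(\Gamma)^{irr}$-invariant, and $\pi(W)=B$ is a $\Gamma$-invariant complex subtorus. Consequently $\Gamma$ permutes the fibers of $A\to A/B$, i.e. acts on the complex torus $A/B=(M/\pi(W),\J')$, and the image $\bar X$ of $X$ in $A/B$ is a single point $\bar a$ (the class of $a$) which is $\Gamma$-invariant, i.e. has a fixed — in particular finite — orbit. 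Here I would invoke the corollary to Theorem~\ref{thm:Muchnik}: a point of a torus with finite $\Gamma$-orbit is a torsion point, provided the induced action on $M/\pi(W)$ still has semisimple Zariski closure with no trivial factor. That last point needs a small argument: $W$ being $G$-invariant and $G$ semisimple, $V=W\oplus W'$ splits $G$-equivariantly over $\Q$ by reductivity, the action on $W'\cong V/W$ is again semisimple, and a trivial factor in $W'$ would be a trivial factor in $V$ — contradiction. (One must also ensure the image group lands in $\SL$ of the appropriate lattice, which is automatic up to finite index / isogeny, as in the normalizations made before Proposition~\ref{pro:inv-analytic}.) Hence $\bar a$ is a torsion point of $A/B$, so after translating $X$ by a torsion point of $A$ lifting $\bar a$ we may assume $\bar a=0$, i.e. $X\subset B=\pi(W)$.

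Now $X$ is an irreducible $\Gamma$-invariant complex analytic subset of the smaller complex torus $B$, and by construction $X$ fully generates $B$ (the tangent spaces span $W$, by minimality of $W$). If $B\subsetneq A$ strictly, I would like to apply the induction hypothesis to $(B,\Gamma|_B)$; for that I need the restricted action of $\Gamma$ on the lattice $W\cap\Lambda$ of $B$ to have semisimple Zariski-closure-neutral-component with no trivial factor, which follows from the $G$-equivariant splitting above exactly as in the previous paragraph. By induction $X$ is a translate of a subtorus of $B$ by a torsion point, hence the same holds in $A$, and undoing the torsion translation we conclude. The base of the induction, $\dim A=0$, is vacuous. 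The remaining case is $B=A$, i.e. $X$ fully generates $M$ already; but then apply Proposition~\ref{pro:inv-analytic}: $F:=X$ is real analytic, $\Gamma$-invariant, and fully generates $M$, so $X=M=A$, contradicting $X\neq A$. Thus the induction closes.

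The main obstacle I anticipate is bookkeeping around the hypotheses on $\Gamma$ under passage to the quotient $A/B$ and the subtorus $B$: one must check that ``semisimple neutral component, no trivial factor, lands in $\SL$ of the relevant lattice'' is inherited by both the sub- and quotient-representations. The conceptual input is the $G$-equivariant rational splitting $V=W\oplus W'$, which exists because $G$ is semisimple (hence reductive) and $W$ is a $G$-submodule defined over $\Q$; once that is in hand, semisimplicity and absence of trivial factors pass to $W$ and $W'$ automatically, and the $\SL$/lattice condition is handled by the same isogeny normalization already used in the proof of Proposition~\ref{pro:inv-analytic}. A secondary subtlety is that a priori one only knows the $\Gamma$-orbit of $\bar a$ is finite rather than a single point; but finiteness suffices to invoke the torsion conclusion of Theorem~\ref{thm:Muchnik}, and then $\bar a$ torsion lets us reduce to the case $X\subset B$ after a torsion translation, which by Proposition~\ref{prosmall}(3)-type reasoning (here just: torsion translations preserve $\Gamma$-invariance up to replacing $a$) is harmless.
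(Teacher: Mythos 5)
Your argument matches the paper's proof step for step: form $W=\sum_{x\in X^{reg}}T_xX$, deduce from Lemma~\ref{lemminimaltorus} and $\Gamma$-invariance of $X$ that $B=\pi(W)$ is a $\Gamma$-invariant complex subtorus, use the no-trivial-factor hypothesis to conclude that the image $\bar a$ of $X$ in $A/B$ is a torsion point, translate by a torsion lift and pass to a finite-index subgroup, and finish by applying Proposition~\ref{pro:inv-analytic} to $(B,\Gamma|_B,X-a')$ via Lemma~\ref{lem:generates}; your induction on $\dim A$ in fact collapses to exactly that single application, because the tangent spaces of $X-a'$ already span $W$, so the inductive call immediately lands in your ``$B=A$'' case. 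One small expository slip worth noting: Lemma~\ref{lem:generates} does not yield the implication ``$X\neq A$ implies $X$ does not fully generate $M$'' (a hypersurface in a simple torus shows the converse can fail) — but you effectively repair this at the end, where the case $B=A$ is handled directly by Proposition~\ref{pro:inv-analytic}, so the overall argument is sound and, in its explicit verification via the $G$-equivariant rational splitting that the hypotheses descend to $W$ and to $V/W$, is actually slightly more careful than the paper's terse assertion.
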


\begin{proof}
Set $W:=\sum_{x\in X^{reg}}T_xX$.  Lemma \ref{lemminimaltorus} shows that $W$ is complex and rational. Since $X$ is $\Gamma$-invariant, so is $W$.
Its projection $B =\pi(W)$ is a complex subtorus of $A$ such that 
\begin{enumerate}
\item $B$ is $\Gamma$-invariant;
\item $B$ contains a translate $Y=X-a$ of $X$;
\item $Y$ fully generates $B$.
\end{enumerate}
The group $\Gamma$
acts on the quotient torus $A/B$ and preserves the image of $X$, {\sl i.e.} the image $\overline{a}$ of $a$.
Since $V$ has no trivial factor, ${\overline{a}}$ is a torsion 
point of $A/B$. Then there exists a torsion point $a'$ in $A$ such that $X\subseteq a'+B$. 
Replacing $a$ by $a'$ and $\Gamma$ by a finite index subgroup $\Gamma'$ which fixes $a'$, 
we may assume that $a$ is torsion and $Y=X-a$ is invariant by $\Gamma$.
We apply Proposition~\ref{pro:inv-analytic}
to $B$, the restriction $\Gamma_B$ of $\Gamma$ to $B$, and the complex analytic subset $Y$: we conclude via Lemma~\ref{lem:generates} that
$Y$ coincides with $B$. Thus, $X=a+B$. 
\end{proof}

%
%

\section{Proof of Theorem~A}\label{sectionproof}

By base change, we may suppose that $X$ is an absolutely irreducible subvariety of $A$. 
We assume that $X$ is small ($X_\epsilon$ is dense in $X$ for all $\epsilon >0$),  
and prove that  $X$ is a torsion coset of $A$.

\subsection{Monodromy and invariance}\label{par:modo-inv-Deligne}

Let $b\in B^o$ be any point.  The monodromy $\rho: \pi_1(B^o)\to \GL_{2g}(\Z)$ of the Betti foliation maps 
the fundamental group of $\pi_1(B^o)$ onto a subgroup  $\Gamma:=Im(\rho)$ of  $\GL_{2g}(\Z)$ that acts by linear diffeomorphisms on 
the torus $\sA_b\simeq \R^{2g}/\Z^{2g}$. As in Section~\ref{par:notations-gamma-G}, we denote by $G$ the neutral component $Zar(\Gamma)^{irr}$. 
We let $V^{G}$ denote the subspace of elements $v\in \R^{2g}$ which are fixed by $G$.
By Deligne's semi-simplicity theorem, the
group $G$ is semi-simple (see \cite[Corollary 4.2.9]{Deligne1971}).
Theorem~B' implies that $X$ is invariant under the Betti foliation, so that $X_b$ is invariant under the action of $\Gamma$. 

\subsection{Trivial trace}\label{sectri}

We first treat the case when $A^{\overline{K}/\C}$ is trivial. 
According to \cite[Theorem 1.5]{Yamaki2016a}, this is the only case we need to treat. However we shall also treat the case of 
a non-trivial trace below for completeness. 

By \cite[Corollary 4.1.2]{Deligne1971} and \cite{Grothendieck1966} (see also \cite[4.1.3.2]{Deligne1971}), we have $V^{G}=\{0\}$
and  Theorem~\ref{thminvcomplexsub} implies that $\sX_b$ is a translation of an abelian subvariety of $\sA_b$ by some
 torsion point  $y_b\in \sA_b$. 
Observe that the leaf $\sF_{y_b}$ is an algebraic muti-section of $\sA^o$ 
(see Remark~\ref{rem:torsion-algebraic-leaves}). 
By base change, we may assume that $\sF_{y_b}$ is a section and is the Zariski closure of a torsion point $y\in A(K)$ in $\sA^o$. 
Theorem~B' shows that $y\in X$, and replacing $X$ by $X-y$ we may suppose that $0\in X$;  then $\sX_b$ is an abelian subvariety of $\sA_b$ for all $b\in B^o$. It follows that $\sX^o$ is a subscheme of the abelian scheme $\sA^o$ over $B^o$ which is stable under the group laws. So $X$ is an abelian subvariety of $A$.

\subsection{The general case}

We do not assume anymore that $A^{\overline{K}/\C}$ is trivial. Set $A^t=A^{\overline{K}/\C}\otimes_{\C}K$. 
Replacing $K$ by a finite extension and $A$ by a finite cover, we assume that $A=A^t\times A^{nt}$ where $A^{nt}$ is an abelian variety over $K$
with trivial trace. We also choose the model $\sA$ so that $\sA^o=(\sA^t)^o\times_{B^o} (\sA^{nt})^o$ where $(\sA^t)^o$ and $(\sA^{nt})^o$ are the Zariski closures of $A^{t}$ and $A^{nt}$ in $\sA^o$ respectively.  Denote by $\pi^t:\sA^o\to (\sA^t)^o$ the projection to the first factor and $\pi^{nt}:\sA^o\to (\sA^{nt})^o$ the projection to the second factor.
After replacing $K$ by a further finite extension and   $B$ by its normalization, we may assume that $(\sA^t)^o=A^{\overline{K}/\C}\times B^o$. 
 Note that  $\pi^t|_{\sA^{t}_b}: \sA^{t}_b\to A^{\overline{K}/\C}$ is an isomorphism
for every fiber $\sA^t_b$ with $b\in B^o$. 

By Proposition~\ref{prosmall}-(i), the generic fibers of $\pi^t(\sX^o)$ and $\pi^{nt}(\sX^o)$ are small. Corollary \ref{cortripro} shows that $\pi^t(\sX^o)=Y\times B^o$ for some subvariety $Y$ of $A^{\overline{K}/\C}$. Section~\ref{sectri} shows that the geometric generic fiber of $\pi^{nt}(\sX^o)$ is
a torsion coset $a+\sA'$ for some torsion point $ a\in A^{nt}_{\overline{K}}(\overline{K})$ and some abelian subvariety $A'$.  Replacing $K$ 
by a finite extension, we may assume that $a$ and $A'$ are defined over $K$.
We have that $\sX^{o}\subseteq \pi^t(\sX)\times_{B^o}\pi^{nt}(\sX)=\pi^t(\sX)+\pi^{nt}(\sX)$ and we only need to show that $\sX^{o}= \pi^t(\sX)\times_{B^o}\pi^{nt}(\sX).$


For every $b\in B^o,$ $\sA_b=\sA^t_b\times \sA^{nt}_b.$ The monodromy on $\sA_b$ is the diagonal product of the monodromies on each factor. It is trivial on the first one so, for every $x\in \sA^t_b$, the fiber $\pi^t|_{\sA_b}^{-1}(x)\simeq \sA^{nt}_b$ is invariant under $\Gamma$. 
It follows that $\pi^t|_{\sA_b}^{-1}(x)\cap \sX_b$ is also $\Gamma$-invariant. By Theorem \ref{thminvcomplexsub}, $\pi^{nt}(\pi^t|_{\sA_b}^{-1}(x)\cap \sX_b)\subseteq \pi^{nt}(\sX_b)$ is a torsion coset of the abelian variety $\sA^{nt}_b$. Since the set of all torsion cosets of $\pi^{nt}(\sX_b)$ is countable, $\pi^{nt}(\pi^t|_{\sA_b}^{-1}(x) \cap \sX_b)$ does not depend on $x\in \pi^t(\sX_b)$. Hence, $\sX_b=\pi^t(\sX_b)\times \pi^{nt}(\sX_b)$ for all $b\in B^o.$ Then $\sX^{o}= \pi^t(\sX)\times_{B^o}\pi^{nt}(\sX)$ which concludes the proof.
\endproof

%
%

\bibliographystyle{plain}
\bibliography{gbc}

\end{document}